\newtheorem{theorem}{Theorem}[section]
\numberwithin{theorem}{section}
\newtheorem{lemma}[theorem]{Lemma}
\newtheorem{corollary}[theorem]{Corollary}
\newtheorem{proposition}[theorem]{Proposition}
\theoremstyle{definition}
\newtheorem{definition}[theorem]{Definition}
\newtheorem{example}[theorem]{Example}
\newtheorem{question}[theorem]{Question}
\newcommand{\Z}{{\mathbb{Z}}}
\newcommand{\BH}{\operatorname{BH}}
\newcommand{\wt}{\operatorname{wt}}
\newcommand{\Gal}{\operatorname{Gal}}
\begin{document}

\title{A generalisation of bent vectors for Butson Hadamard matrices\footnote{A preliminary version of this paper was presented by J. A. Armario at the RICCOTA conference held in Rijeka in July 2023, entitled \textit{Self-dual Butson bent sequences}.}}
\date{}
\author[1]{J. A. Armario}
\author[2]{R. Egan}
\author[3]{H. Kharaghani}
\author[4]{P. \'O Catháin}
\affil[1]{\small
Departamento de Matem\'atica Aplicada I,
Universidad de Sevilla,
Avda. Reina Mercedes s/n,
41012 Sevilla, Spain}
\affil[2]{\small School of Mathematical Sciences,
Dublin City University, Ireland}
\affil[3]{\small Department of Mathematics and Computer Science, University of Lethbridge, Lethbridge, Alberta, T1K 3M4, Canada}
\affil[4]{\small Fiontar \& Scoil na Gaeilge,
Dublin City University, Ireland}

\maketitle
\begin{abstract}
An $n\times n$ complex matrix $M$ with entries in the $k^{\textrm{th}}$ roots of unity which satisfies $MM^{\ast} = nI_{n}$ is called a Butson Hadamard matrix. While a matrix with entries in the $k^{\textrm{th}}$ roots typically does not have an eigenvector with entries in the same set, such vectors and their generalisations turn out to have multiple applications. A bent vector for $M$ satisfies $M{\bf x} = \lambda {\bf y}$ where ${\bf x}$  has entries in the $k^{\textrm{th}}$ roots of unity and all entries of $\textbf{y}$ are complex numbers of norm $1$. Such a bent vector ${\bf x}$ is self-dual if ${\bf y} = \mu{\bf x}$ and conjugate self-dual if ${\bf y} = \mu\overline{\bf x}$ for some $\mu$ of norm $1$.

Using techniques from algebraic number theory, we prove some order conditions and non-existence results for self-dual and conjugate self-dual bent vectors; using tensor constructions and Bush-type matrices we give explicit examples. We conclude with an application to the covering radius of certain non-linear codes generalising the Reed Muller codes. \\
\textbf{MSC2020: 05B20, 94B25, 94A60}\\
\textbf{Keywords: Hadamard matrix, bent function, Butson code}
\end{abstract}

\thispagestyle{empty}

\section{Introduction}

Let $H$ be a matrix of order $n$ with complex entries of modulus $1$. Hadamard's celebrated theorem on maximal determinants states that $\det(HH^{\ast}) \leq n^{n}$, for a recent survey see \cite{Maxdet}. Equality holds if and only if the rows of $H$ are pairwise orthogonal under the Hermitian inner product. In this case $H$ is called a \textit{Hadamard matrix}. Let $\zeta_{k} = e^{\frac{2 \pi \sqrt{-1}}{k}}$ be a primitive $k^{\textrm{th}}$ root of unity, and denote by $\langle \zeta_{k}\rangle$ the set of all $k^{\textrm{th}}$ roots of unity. An $n \times n$  Hadamard matrix with entries in  $\langle \zeta_{k}\rangle$ is called a \textit{Butson Hadamard matrix} of order $n$ and phase $k$. We write $\BH(n,k)$ for the set of such matrices. Note that the terms \textit{Hadamard matrix} and \textit{complex Hadamard matrix} are often used to refer to the elements of $\textrm{BH}(n, 2)$ and $\textrm{BH}(n, 4)$ respectively. The following result is well-known, being essentially contained in the work of Sylvester, \cite{Sylvester}.

\begin{proposition}
Let $G$ be an abelian group of order $n$ and exponent $k$.
The character table of $G$, denoted by $F(G)$, belongs to $\BH(n,k)$.
\end{proposition}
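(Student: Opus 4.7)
The plan is to verify directly the two defining properties of a $\BH(n,k)$ matrix for the character table $F(G)$, whose rows are indexed by the irreducible characters of $G$ and columns by the elements of $G$, with $(\chi, g)$-entry equal to $\chi(g)$. Since $G$ is abelian, the number of irreducible (all one-dimensional) characters equals $|G| = n$, so $F(G)$ is square of order $n$.

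First I would show that every entry lies in $\langle \zeta_k \rangle$. Because $G$ has exponent $k$, every $g \in G$ satisfies $g^k = 1_G$, and consequently, for any character $\chi$,
\[
\chi(g)^k \;=\; \chi(g^k) \;=\; \chi(1_G) \;=\; 1,
\]
so $\chi(g)$ is a $k$-th root of unity. This step uses the exponent hypothesis in an essential way; without it one would only recover entries in $\langle \zeta_{|G|}\rangle$.

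Second, I would check $F(G) F(G)^{\ast} = n I_n$. The $(\chi, \psi)$-entry of the product is $\sum_{g \in G} \chi(g)\overline{\psi(g)} = \sum_{g \in G} (\chi\psi^{-1})(g)$, where $\overline{\psi(g)} = \psi(g)^{-1} = \psi^{-1}(g)$ follows from the fact that $\psi(g)$ is a root of unity. This reduces the claim to the standard orthogonality fact: for any character $\eta$ of a finite abelian group,
\[
\sum_{g \in G} \eta(g) \;=\; \begin{cases} |G| & \text{if } \eta \text{ is trivial,} \\ 0 & \text{otherwise.}\end{cases}
\]
The vanishing follows from the observation that if $\eta(g_0) \neq 1$ then translating the index $g \mapsto g g_0$ gives $S = \eta(g_0) S$, forcing $S = 0$. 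Applied with $\eta = \chi\psi^{-1}$ this yields $n$ on the diagonal and $0$ off the diagonal, completing the verification.

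There is no real obstacle: the argument is a direct packaging of the orthogonality relations together with the observation that exponent $k$ forces the entries into $\langle \zeta_k\rangle$. If a more self-contained presentation is desired, the orthogonality fact can be proved in two lines as above; otherwise, a citation to any standard reference on characters of finite abelian groups suffices.
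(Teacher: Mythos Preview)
Your argument is correct and is precisely the approach the paper takes: it simply states that the result follows from the orthogonality relations for characters of abelian groups, which is exactly what you unpack. Your explicit verification that the exponent hypothesis forces entries into $\langle \zeta_k\rangle$ is a helpful addition that the paper leaves implicit.
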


The proof follows immediately from the orthogonality relations for characters of abelian groups. The direct product of groups corresponds to the Kronecker product of character tables, and Sylvester's name is associated in particular with the character tables of elementary abelian $2$-groups:
\[ F(C_{2}^{m}) = \otimes^{m} \begin{pmatrix} 1 & 1 \\ 1 & -1 \end{pmatrix}\,,\]
often called \textit{Sylvester matrices} in the literature. Two ${\pm 1}$-vectors ${\bf x}$ and ${\bf y}$ of length $2^{m}$ are called \emph{bent} if $F(C_{2}^{m}){\bf x} = 2^{m/2}{\bf y}$. The $\pm1$-vector $\textbf{x}$ is \textit{self-dual bent} if and only if $F(C_{2}^{m})\textbf{x} = \pm2^{m/2} \textbf{x}$; equivalently $\textbf{x}$ is an eigenvector of $F(C_{2}^{m})$.

Perhaps the simplest non-real examples of Butson matrices are the discrete Fourier transform matrices $F(C_{n}) =[\zeta_n^{ij}]_{0 \leq i,j \leq n-1}\in \BH(n,n)$. The character tables of elementary abelian groups of odd order, $F(C_{p}^{m}) = \otimes^{m}F(C_{p})$ are sometimes called \textit{generalised Sylvester matrices}. For an abelian group $G$, our convention is that $F(G)$ has \textbf{rows} indexed by characters and \textbf{columns} indexed by group elements. The ordering is arbitrary, but we typically take the natural ordering on $C_{n}$ inherited from $\mathbb{Z}$, and a lexicographic ordering for direct products of cyclic groups. The matrix $F(G)$ acts naturally on column vectors which have entries labelled by the elements of $G$. There is a natural identification between functions $f: G \rightarrow \mathbb{C}$ and column vectors $[f(g)]_{g \in G}$. Note that $F(G)[f(g)]_{g \in G} = [ \langle \chi, \overline{f} \rangle]_{\chi \in \hat{G}}$. For $G = C_{n}$, this is the discrete Fourier transform, while for $G = C_{2}^{n}$ this is the Walsh-Hadamard transform. 

As noted above, a Boolean function $f : C_{2}^{m} \rightarrow C_{2}$ is naturally identified with the vector $\textbf{x} = [(-1)^{f(a)}]^{\top}_{a \in C_{2}^{m}}$. Historically, bent vectors were first studied in cryptography as a class of Boolean functions which are particularly difficult to approximate with affine functions. In the 1990s, connections between bent functions, group-invariant Hadamard matrices and (relative) difference sets have been studied, \cite{Horadam}. Generalising the index-set of the bent vector leads naturally to the following.

\begin{definition}\label{def:bent}
Let $H \in \BH(n,k)$, and let $\textbf{x}$ be a vector of length $n$ with entries in $\langle \zeta_{k}\rangle$. We say that $\textbf{x}$ is $H$-\textit{bent} if and only if
\[ H \textbf{x} = \sqrt{n} \textbf{y}\,,\]
where all entries of $\textbf{y}$ are complex numbers of norm $1$. Furthermore, \textbf{x} is \textit{self-dual $H$-bent} if $\textbf{y} = \lambda \textbf{x}$ for $\lambda$ of modulus $1$; and \textit{conjugate self-dual $H$-bent} if $\textbf{y} = \lambda \overline{\textbf{x}}$, where $\overline{\textbf{x}}$ is the vector of complex conjugates of elements of $\textbf{x}$.
\end{definition}

A special case of this definition was studied for real Hadamard matrices and for $BH(n, 4)$ matrices by Sol\'e, Cheng and coauthors \cite{SCGR21, SLCKS23} under the name of {\em bent sequence attached to a Hadamard matrix}. A bent vector for $F(G)$ is naturally identified with a function $f:G \rightarrow \mathbb{C}$, and such functions were considered by Schmidt for certain maps between abelian groups, \cite{Sch19}. It has been shown computationally that there exist Butson-Hadamard matrices which admit conjugate self-dual bent vectors but do not admit self-dual bent vectors, \cite{SLAEOS23}. 

In this paper, we consider self-dual and conjugate self-dual bent vectors for matrices $H \in \BH(n,k)$. Section \ref{preliminaries} recalls preliminaries and related work. Section \ref{algebraic} applies techniques from algebraic number theory to prove some necessary conditions and non-existence results for self-dual bent and conjugate-bent vectors. In Section \ref{constructions} we give explicit constructions for bent vectors, including a novel construction in prime squared dimension for solutions to the matrix equation $MN = p\overline{N}$ for $M,N \in \BH(p^{2},p)$. Moreover, if $p=3$, we then have $M=N$, and
every column of $M$ is a bent vector for $M$; we call such matrices self-bent. Finally, in Section \ref{codes} we give an application to the covering radius of certain codes which generalise the first order Reed-Muller codes.

\section{Preliminaries and Notation}\label{preliminaries}

 For non-zero $k \in\mathbb{Z}$ define $k_{p}$ to be the highest power of $p$ dividing $k$, and $\phi(k)$ to be the Euler phi function. 
 For a complex number $z$, denote by $\overline{z}$ its complex conjugate and by $\mathfrak{R}(z)$ its real part. 
 Denote by $\Z_k$ be the ring of integers modulo $k$ with $k>1$, and denote by $\Z_k^m$ the set of $m$-tuples over $\Z_k$.
The transpose of a matrix $M$ is denoted $M^{\top}$, the conjugate transpose by $M^{\ast}$ and the entrywise application of the 
complex conjugate by $\overline{M}$. 

Throughout this paper we use boldface lower-case for vectors, \textbf{x}, with the entry indexed by $i$ denoted $x_{i}$. We use capital letters for matrices, $M$, with the entry in row $i$ and column $j$ denoted $m_{i,j}$. Indices are understood from context to be drawn either from the set $\{1, \ldots, n\}$, for an arbitrary Hadamard matrix of order $n$, or from a finite abelian group $G$ for $F(G)$, as above. 


\subsection{Butson Hadamard matrices}\label{sec:Butson}

A \textit{Butson Hadamard} matrix of order $n$ and phase $k$
is a matrix $H$ with entries in $\langle \zeta_{k} \rangle$
such that $HH^*=nI_n$, where $I_n$ denotes the identity 
matrix of order $n$. We write $\BH(n,k)$ for the set of such 
matrices; many examples are furnished by the generalised 
Sylvester matrices introduced above. 
A pair of monomial matrices $(P,Q)$ with non-zero entries in $\langle \zeta_{k} \rangle$ acts on the set $\BH(n, k)$ by $H\cdot (P,Q) = PHQ^{\ast}$. This action preserves the set $\BH(n,k)$, and matrices in the same orbit of the action are said to be Hadamard-equivalent.

A Butson matrix $H\in \BH(n,k)$ is conveniently represented in logarithmic form, that is, the matrix $H=[\zeta_k^{\varphi_{i,j}}]_{i,j=1}^n$ is represented by the matrix $L(H)=[\varphi_{i,j}\mod k]_{i,j=1}^n$ with the convention that $L_{i,j}\in\Z_k$ for all $i,j\in\{1,\ldots,n\}.$
\begin{example} \label{formaLog}
The following is a matrix $H \in \BH(4,8)$, displayed in  logarithmic form
\[
L(H)=\left[\begin{array}{cccc}
   0 & 0 & 0 & 0    \\
   0 & 2 & 4 & 6     \\
   0 & 4 & 0 & 4 \\
    0 & 6 & 4 & 2
\end{array}
\right]
\]
\end{example}
Observe that the matrix above is in dephased form, that is, its first row and column are all $0$. Every matrix $H\in \BH(n, k)$ is Hadamard equivalent to a dephased matrix. Throughout this paper all matrices are assumed to be dephased, with the exception of circulant matrices.

\subsection{Bent functions and generalizations}

The initial study of bent vectors was motivated by applications in cryptography and signal processing. A well-known result of Kumar, Scholtz, and Welch uses bilinear forms to construct bent vectors for the generalised Sylvester matrices $F(C_{k}^{m})$, for all $k$ when $m$ is even; and for all $k\neq 2 \mod 4$ when $m$ is odd. When $m$ is odd and $k \equiv 2 \bmod 4$ no examples are known; partial non-existence results have been obtained, \cite{LeungSchmidt19}.

\begin{proposition}[Theorem 1, \cite{KSW85}]\label{ex_sd-bent}
Let $k$ be an integer and $m$ an even integer. Define $f\colon {\mathbb{Z}}_{k}^m \rightarrow {\mathbb{Z}}_{k}$ by
\[
f(x_1,\ldots,x_{2t})=x_1 x_{t+1}+\ldots+x_tx_{2t}\,.
\]
Let ${\bf x}$ be the $C_{k}^{m}$-indexed vector with $x_{c} = \zeta_k^{f(c)}$. 
Then $F(C_{k}^{m}){\bf x}=k^{m/2}\overline{{\bf x}}$. In other words, ${\bf x}$ is  conjugate self-dual $F(C_{k}^{m})$-bent.
\end{proposition}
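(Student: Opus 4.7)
The plan is a direct computation of $(F(C_k^m)\mathbf{x})_a$ for an arbitrary index $a \in \mathbb{Z}_k^m$. Writing $m = 2t$, I would split the coordinates in half: $c = (u,v)$ and $a = (p,q)$ with $u,v,p,q \in \mathbb{Z}_k^t$. Then $f(c) = u \cdot v$ and the character value at $c$ for the character labelled by $a$ is $\zeta_k^{p \cdot u + q \cdot v}$, so
\[
(F(C_k^m)\mathbf{x})_a \;=\; \sum_{u,v \in \mathbb{Z}_k^t} \zeta_k^{p \cdot u + q \cdot v + u \cdot v}.
\]

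Next I would factor out the sum over $v$, which is free to range over $\mathbb{Z}_k^t$ independently of $u$:
\[
(F(C_k^m)\mathbf{x})_a \;=\; \sum_{u \in \mathbb{Z}_k^t} \zeta_k^{p \cdot u} \Bigl(\sum_{v \in \mathbb{Z}_k^t} \zeta_k^{(u+q) \cdot v}\Bigr).
\]
The inner sum factors coordinatewise as a product of $t$ geometric sums $\sum_{v_i \in \mathbb{Z}_k} \zeta_k^{(u_i + q_i) v_i}$, each of which equals $k$ when $u_i \equiv -q_i \pmod{k}$ and vanishes otherwise. Thus the only surviving outer term is $u = -q$, contributing $k^t \zeta_k^{-p\cdot q}$.

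Since $f(a) = p\cdot q$, this gives $(F(C_k^m)\mathbf{x})_a = k^{m/2}\zeta_k^{-f(a)} = k^{m/2}\overline{x_a}$, which is exactly the required identity. There is no real obstacle here: the argument is essentially the orthogonality relation $\sum_{v \in \mathbb{Z}_k^t} \zeta_k^{w \cdot v} = k^t \delta_{w,0}$ applied to the bilinear form $f$, and the split $m = 2t$ is what makes the Gauss-sum evaluation collapse to a single term rather than to a sum of many terms with comparable magnitudes.
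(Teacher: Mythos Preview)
Your argument is correct: the orthogonality relation $\sum_{v\in\mathbb{Z}_k^t}\zeta_k^{w\cdot v}=k^t\delta_{w,0}$ collapses the double sum to the single term $u=-q$, yielding $k^{m/2}\zeta_k^{-p\cdot q}=k^{m/2}\overline{x_a}$ exactly as claimed. Note that the paper does not supply its own proof of this proposition --- it is quoted from \cite{KSW85} --- and your direct computation is precisely the standard argument one finds there, so there is nothing to compare.
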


\subsection{Algebraic number theory}

Let $\mathbb{Q}(\zeta_k)$ denote the {\em cyclotomic field} generated by $\zeta_k$ over the rationals. It is well known that if $z\in \mathbb{Q}(\zeta_k)$ is a root of the unity then $z \in \langle \zeta_{k} \rangle$ if $k$ is even, and $z \in \langle \zeta_{2k}\rangle$ if $k$ is odd. Thus for odd $k$, the fields $\mathbb{Q}(\zeta_k)$ and $\mathbb{Q}(\zeta_{2k})$ coincide, but cyclotomic fields are otherwise non-isomorphic.

The elements of the ring
\[\mathbb{Z}[\zeta_k]=\Big\{\sum_{j=0}^{k-1}b_j\zeta_k^j \,\colon \,b_j\in \mathbb{Z}\Big\}\]
are called {\em cyclotomic integers}, and they are the algebraic integers in the cyclotomic field $\mathbb{Q}(\zeta_k)$, \cite[Theorem 10.2]{Neukirch}. The following is an immediate corollary.

\begin{proposition} \label{prop:cycint}
An algebraic integer of norm $1$ in $\mathbb{Z}[\zeta_{k}]$ is of the form $\pm \zeta_{k}^{t}$ for some integer $t$.
\end{proposition}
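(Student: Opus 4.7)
The plan is to reduce the statement to Kronecker's classical theorem that an algebraic integer all of whose Galois conjugates lie in the closed unit disc is either zero or a root of unity, and then to invoke the classification of roots of unity in $\mathbb{Q}(\zeta_k)$ recalled immediately before the proposition. I interpret ``norm $1$'' as complex modulus $1$, i.e.\ $\alpha\bar\alpha=1$ in $\mathbb{C}$, which is the condition that is actually used in Definition~\ref{def:bent}.

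First, I would argue that $|\alpha|=1$ forces $|\sigma(\alpha)|=1$ for every $\sigma\in\Gal(\mathbb{Q}(\zeta_k)/\mathbb{Q})$. The essential point is that this Galois group is abelian and complex conjugation (the automorphism sending $\zeta_k\mapsto\zeta_k^{-1}$) is one of its elements; hence complex conjugation commutes with every $\sigma$, giving $\overline{\sigma(\alpha)}=\sigma(\bar\alpha)$. Therefore
\[
|\sigma(\alpha)|^{2} \;=\; \sigma(\alpha)\,\overline{\sigma(\alpha)} \;=\; \sigma(\alpha\bar\alpha) \;=\; \sigma(1) \;=\; 1\,.
\]
With all conjugates on the unit circle, Kronecker's theorem applies, and since $\alpha\neq 0$ I conclude that $\alpha$ is a root of unity in $\mathbb{Q}(\zeta_k)$. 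The classification quoted from Neukirch just before the proposition then gives $\alpha\in\langle\zeta_k\rangle$ when $k$ is even and $\alpha\in\langle\zeta_{2k}\rangle$ when $k$ is odd; in either case $\alpha$ can be written as $\pm\zeta_k^{t}$ for some integer $t$ (using $-1=\zeta_k^{k/2}$ in the even case).

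There is no real obstacle: the only nontrivial input is Kronecker's theorem, and the abelianness of the cyclotomic Galois group makes the passage from $|\alpha|=1$ in one embedding to $|\sigma(\alpha)|=1$ in all embeddings essentially automatic. The proposition is therefore, as the authors assert, an immediate corollary of the preceding paragraph together with one classical fact.
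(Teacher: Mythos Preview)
Your proof is correct and follows the standard route via Kronecker's theorem; the abelianness of $\Gal(\mathbb{Q}(\zeta_k)/\mathbb{Q})$ is exactly what propagates $|\alpha|=1$ to all conjugates. The paper does not actually supply a proof of this proposition---it merely declares it an ``immediate corollary'' of the preceding paragraph---so you have filled in precisely the argument the authors suppressed.
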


Every proper ideal of $\mathbb{Z}[\zeta_k]$ can be uniquely factorized into a product of finitely many prime ideals. The principal ideal of $\mathbb{Z}[\zeta_k]$ generated by $a\in \mathbb{Z}[\zeta_k]$ is denoted by $a\mathbb{Z}[\zeta_k]$. A prime ideal $\mathfrak{p}$ of $\mathbb{Z}[\zeta_k]$ appears in the factorization of $a\mathbb{Z}[\zeta_k]$ if and only if $a\in \mathfrak{p}$. Our first goal in this section is to describe the (well-known) decomposition of the principal ideals $p\mathbb{Z}[\zeta_{k}]$ into prime ideals where $p$ is a rational prime. Recall that $p$ is said to be \textit{ramified} in $\mathbb{Z}[\zeta_{k}]$ if a proper power of a prime ideal divides $p\mathbb{Z}[\zeta_{k}]$, and unramified otherwise. For any composite number $k = p^{r}m$ with $m$ coprime to $p$, recall that $k_{p} = p^{r}$ the $p$-part of $k$.

\begin{theorem}\cite[Section 13.2]{IR90}\label{cycprime} 
Let $p$ be a rational prime, and let $\phi$ be the Euler totient function. Let $f$ be the least positive integer such that $p^{f} \equiv 1 \mod (k/k_{p})$ and define $g$ by $fg = \phi(k/k_{p})$.
The factorisation of $p$ in $\mathbb{Z}[\zeta_{k}]$ is as follows:
\[ p\mathbb{Z}[\zeta_k]=(\mathfrak{p}_1\cdots\mathfrak{p}_g)^{\phi(k_{p})}\, \]
where the $\mathfrak{p}_{i}$ are distinct prime ideals of $\mathbb{Z}[\zeta_{k}]$. In particular,
an odd prime $p$ is ramified in $\mathbb{Z}[\zeta_{k}]$ precisely when $p \mid k$,
and $2$ is ramified when $4 \mid k$.
\end{theorem}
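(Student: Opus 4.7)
The plan is to reduce to the case of prime-power conductor by writing $k = p^r m$ with $p^r = k_p$ and $\gcd(p, m) = 1$, analyse the ramification of $p$ in the two subfields $\mathbb{Q}(\zeta_{p^r})$ and $\mathbb{Q}(\zeta_m)$ separately, and then reassemble the factorisation inside their compositum $\mathbb{Q}(\zeta_k)$.

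For the $p$-primary part, I would evaluate the cyclotomic polynomial $\Phi_{p^r}(X) = \prod_{j}(X - \zeta_{p^r}^{j})$ (with $j$ ranging over units modulo $p^r$) at $X = 1$ to obtain $\Phi_{p^r}(1) = p$; combined with the fact that $(1 - \zeta_{p^r}^{j})/(1 - \zeta_{p^r})$ is a unit in $\mathbb{Z}[\zeta_{p^r}]$ for every such $j$, this gives $p\mathbb{Z}[\zeta_{p^r}] = \mathfrak{q}^{\phi(p^r)}$ with $\mathfrak{q} = (1 - \zeta_{p^r})\mathbb{Z}[\zeta_{p^r}]$, so $p$ is totally ramified in $\mathbb{Z}[\zeta_{p^r}]$. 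For the prime-to-$p$ part, since $\gcd(p, m) = 1$ the polynomial $X^m - 1$ is separable modulo $p$, so $\Phi_m(X)$ factors modulo $p$ as a product of distinct irreducibles, each of degree $f = \operatorname{ord}_{m}(p)$. Dedekind--Kummer applied to the monogenic ring $\mathbb{Z}[\zeta_m] = \mathbb{Z}[X]/(\Phi_m(X))$ then delivers the unramified decomposition $p\mathbb{Z}[\zeta_m] = \mathfrak{p}_1 \cdots \mathfrak{p}_g$ with $fg = \phi(m)$.

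To glue the two factorisations, I would localise at a prime of $\mathbb{Q}(\zeta_k)$ above $p$: the completion of $\mathbb{Q}(\zeta_m)$ is unramified of degree $f$ over $\mathbb{Q}_p$, while $\mathbb{Q}_p(\zeta_{p^r})/\mathbb{Q}_p$ is totally ramified of degree $\phi(p^r)$, and the standard disjointness of unramified and totally ramified local extensions forces the completion of $\mathbb{Q}(\zeta_k) = \mathbb{Q}(\zeta_m, \zeta_{p^r})$ to have ramification index $\phi(p^r)$ and residue degree $f$. Thus each $\mathfrak{p}_i$ of $\mathbb{Z}[\zeta_m]$ lifts to a single prime $\mathfrak{P}_i$ of $\mathbb{Z}[\zeta_k]$ with $\mathfrak{p}_i \mathbb{Z}[\zeta_k] = \mathfrak{P}_i^{\phi(p^r)}$, producing the claimed $p\mathbb{Z}[\zeta_k] = (\mathfrak{P}_1 \cdots \mathfrak{P}_g)^{\phi(p^r)}$. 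The ramification corollary is then immediate, since $\phi(p^r) > 1$ iff $p^r > 2$, which for odd $p$ means $p \mid k$ and for $p = 2$ means $4 \mid k$. The main obstacle is precisely this compositum step, where one must ensure the $g$ primes of $\mathbb{Z}[\zeta_m]$ above $p$ neither merge nor split further in $\mathbb{Z}[\zeta_k]$; the local argument sidesteps a direct global comparison of $\sum e_i f_i$ with $\phi(k)$, but one could alternatively use $\mathbb{Z}[\zeta_k] = \mathbb{Z}[\zeta_m][\zeta_{p^r}]$ and apply Dedekind--Kummer a second time prime by prime in $\mathbb{Z}[\zeta_m]$.
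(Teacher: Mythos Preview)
The paper does not supply its own proof of this theorem; it is quoted as a known result with a citation to \cite[Section 13.2]{IR90}, so there is nothing in the paper to compare against directly. Your sketch is correct and is essentially the classical argument one finds in Ireland--Rosen: total ramification of $p$ in $\mathbb{Z}[\zeta_{p^r}]$ via $\Phi_{p^r}(1)=p$ and the unit status of the ratios $(1-\zeta_{p^r}^{j})/(1-\zeta_{p^r})$; the unramified splitting in $\mathbb{Z}[\zeta_m]$ via Dedekind--Kummer and separability of $X^m-1$ modulo $p$; and the compositum step handled locally, where the disjointness of the totally ramified piece and the unramified piece forces $e=\phi(p^r)$, $f=\operatorname{ord}_m(p)$, and hence $g=\phi(m)/f$. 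The only point worth tightening is the remark that each $\mathfrak{p}_i$ of $\mathbb{Z}[\zeta_m]$ lifts to a \emph{single} prime of $\mathbb{Z}[\zeta_k]$: your local computation gives $ef=\phi(p^r)\cdot f$ at every prime above $p$, and then $\sum e_if_i=[\mathbb{Q}(\zeta_k):\mathbb{Q}(\zeta_m)]=\phi(p^r)$ over each $\mathfrak{p}_i$ forces exactly one $\mathfrak{P}_i$ per $\mathfrak{p}_i$, so no merging or further splitting can occur. With that made explicit, the argument is complete.
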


By $\textrm{Gal}_{k} = \mbox{Gal}(\mathbb{Q} (\zeta_k)/\mathbb{Q})$, we denote the Galois group of the extension $\mathbb{Q}(\zeta_k)/\mathbb{Q}$. Each $\sigma\in \mbox{Gal}_{k}$ raises $\zeta_k$ to a power relatively prime to $k$ and is uniquely determined by this action. For any rational prime $p$ and any $\sigma \in \Gal_{k}$, all orbits of $\sigma$ on the prime ideals over $p$ are of equal length. We denote by  $\sigma^*$ the distinguished field automorphism $\mathbb{Q}(\zeta_k)$ which acts as complex conjugation, i.e., $\sigma^*(\zeta_k)=\zeta_k^{-1}$. There is a long history of using algebraic number theory to deduce non-existence results for difference sets and related objects. The following result is crucial (though not always recorded explicitly).

\begin{theorem}[VI.15, \cite{BJL}]\label{BJLThm}
Let $\mathfrak{p}$ be a prime ideal over the rational prime $p$, and let $m = k/k_{p}$ for integer $k$. A field automorphism $\sigma \in \textrm{Gal}_{k}$ fixes $\mathfrak{p}$ if and only if $\sigma(\zeta_{m}) = \zeta_{m}^{p^{j}}$ for some integer $j$. In particular, $\mathfrak{p}$ is fixed by complex conjugation if and only if $\sigma^{*}(\zeta_{m}) = \zeta_{m}^{p^{j}}$ for some integer $j$.
\end{theorem}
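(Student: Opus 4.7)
The plan is to work with the tower $\mathbb{Q}\subset \mathbb{Q}(\zeta_m)\subset \mathbb{Q}(\zeta_k)$, where $m=k/k_p$. By Theorem \ref{cycprime} applied to $\mathbb{Z}[\zeta_m]$, the prime $p$ is unramified in $\mathbb{Q}(\zeta_m)$, since $\gcd(p,m)=1$. A degree calculation gives $[\mathbb{Q}(\zeta_k):\mathbb{Q}(\zeta_m)]=\phi(k)/\phi(m)=\phi(k_p)$, which agrees with the ramification index appearing in the factorisation of $p\mathbb{Z}[\zeta_k]$ in Theorem \ref{cycprime}. Hence the upper step of the tower is totally ramified at every prime of $\mathbb{Z}[\zeta_m]$ lying over $p$.

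Given $\mathfrak{p}$ as in the statement, set $\mathfrak{q}=\mathfrak{p}\cap\mathbb{Z}[\zeta_m]$. Total ramification forces $\mathfrak{p}$ to be the \emph{unique} prime of $\mathbb{Z}[\zeta_k]$ above $\mathfrak{q}$. I would then argue that $\sigma\in\Gal_k$ fixes $\mathfrak{p}$ if and only if it fixes $\mathfrak{q}$: one direction comes from intersecting $\sigma(\mathfrak{p})=\mathfrak{p}$ with $\mathbb{Z}[\zeta_m]$, and the converse follows since $\sigma(\mathfrak{p})$ is then a prime of $\mathbb{Z}[\zeta_k]$ over $\mathfrak{q}$ and so must coincide with $\mathfrak{p}$ by uniqueness. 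Equivalently, $\sigma$ fixes $\mathfrak{p}$ iff the restriction $\sigma|_{\mathbb{Q}(\zeta_m)}$ lies in the decomposition group $D(\mathfrak{q}|p)\leq \Gal(\mathbb{Q}(\zeta_m)/\mathbb{Q})$.

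The final step is to identify this decomposition group. Since $p$ is unramified in $\mathbb{Z}[\zeta_m]$, a standard fact from algebraic number theory says that $D(\mathfrak{q}|p)$ is cyclic and generated by the Frobenius automorphism $\sigma_p:\zeta_m\mapsto \zeta_m^p$, of order equal to the residue degree $f$. Combining this with the previous paragraph, $\sigma$ fixes $\mathfrak{p}$ precisely when $\sigma(\zeta_m)=\zeta_m^{p^j}$ for some integer $j$, which is the first assertion. The claim about complex conjugation is then the special case $\sigma=\sigma^{*}$.

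I expect the main obstacle to be the careful bookkeeping in the first two paragraphs: splitting the ramification cleanly between the two steps of the tower, and then justifying that the uniqueness of $\mathfrak{p}$ over $\mathfrak{q}$ really does let one transfer the decomposition-group condition from $\mathbb{Z}[\zeta_k]$ down to $\mathbb{Z}[\zeta_m]$. Once that reduction is secured, the identification of $D(\mathfrak{q}|p)$ with powers of Frobenius in the unramified setting is classical and requires no additional argument.
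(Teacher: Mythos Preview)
The paper does not supply its own proof of this theorem: it is quoted as Theorem~VI.15 of \cite{BJL} and used as a black box, so there is no in-paper argument to compare against. Your outline is nonetheless a correct and standard proof. The key reductions are all sound: since $m=k/k_p$ is coprime to $p$, the prime $p$ is unramified in $\mathbb{Z}[\zeta_m]$; the degree $[\mathbb{Q}(\zeta_k):\mathbb{Q}(\zeta_m)]=\phi(k_p)$ equals the ramification index of $p$ in $\mathbb{Z}[\zeta_k]$, forcing total ramification in the upper step; hence $\mathfrak{p}$ is the unique prime over $\mathfrak{q}=\mathfrak{p}\cap\mathbb{Z}[\zeta_m]$, and $\sigma(\mathfrak{p})=\mathfrak{p}$ is equivalent to $\sigma|_{\mathbb{Q}(\zeta_m)}$ lying in the decomposition group $D(\mathfrak{q}\mid p)$. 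In the unramified abelian extension $\mathbb{Q}(\zeta_m)/\mathbb{Q}$ this decomposition group is cyclic, generated by the Frobenius $\zeta_m\mapsto\zeta_m^{p}$, which gives exactly the stated criterion. The only cosmetic point worth tightening is the line ``$\sigma(\mathfrak{p})=\mathfrak{p}$ implies $\sigma(\mathfrak{q})=\mathfrak{q}$'': this uses that $\mathbb{Q}(\zeta_m)$ is Galois over $\mathbb{Q}$, so $\sigma$ restricts to $\mathbb{Z}[\zeta_m]$ and $\sigma(\mathfrak{q})=\sigma(\mathfrak{p}\cap\mathbb{Z}[\zeta_m])=\sigma(\mathfrak{p})\cap\mathbb{Z}[\zeta_m]$.
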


\begin{definition}
The prime $p$ is \textit{self-conjugate} modulo $k$ if and only if there exists an integer $j$ such that $p^{j} \equiv -1 \mod k/k_{p}$.
More generally, for integers $n$ and $k$, we say that $n$ is {\it self-conjugate} modulo $k$ if all prime divisors $p$ of $n$ are self-conjugate modulo $k/k_{p}$.
\end{definition}

Clearly by Theorem \ref{BJLThm}, a prime $p$ is self-conjugate modulo $k$ if and only if every prime ideal above $p$ in $\mathbb{Z}[\zeta_{k}]$ is fixed by complex conjugation. This idea is much used in design theory to show non-existence of solutions to certain norm-equations. For example, consider the ideal $5\mathbb{Z}[\zeta_{13}]$, which is unramified and a product of three prime ideals by Theorem \ref{cycprime}, each fixed by complex conjugation by Theorem \ref{BJLThm} since $5^{2} \equiv -1 \mod 13$. Let $\mathfrak{p}$ be a prime ideal dividing $5\mathbb{Z}[\zeta_{13}]$. In the norm equation $xx^{\ast} = 5^{2t+1}$, the right hand side is divisible by an odd power of $\mathfrak{p} \mid 5$, while the left side must be divisible by an even power. Hence no such $x$ exists. One concludes that there are no matrix solutions of $MM^{\ast} = 5I_{n}$ with entries in $\mathbb{Z}[\zeta_{13}]$. 

Suppose that $p$ is self-conjugate modulo $k$, that $p$ divides $n$ and that there exists $H \in \BH(n,k)$. Let $\mathfrak{p}_{i}$ be the prime ideals above $p$ fixed by complex conjugation, where $1 \leq i \leq g$. 
Denote by $x$ the determinant of $H$. Then $xx^{\ast} = n^{n}$. Since $\mathfrak{p}_{i}$ is a prime ideal which divides $n$, it divides one of $x$ and $x^{\ast}$. But if $\mathfrak{p}$ divides $x$ then $\mathfrak{p}^\ast$ divides $x^{\ast}$ and since $\mathfrak{p}_{i}^{\ast} = \mathfrak{p}_{i}$ we conclude that $\mathfrak{p}_{i}^{2}$ divides $n$. Since the conclusion holds for every $\mathfrak{p}_{i}$, it follows that $p$ divides $n^{n}$ to an even power. 

\section{Nonexistence of bent vectors for Butson matrices}\label{algebraic} 

In this section we apply results from algebraic number theory to produce non-existence results for bent vectors of Butson matrices under number theoretic conditions.

\subsection{Order restrictions for $k= 2,3,4$}

We begin with non-existence results for bent vectors for small values of $k$. Note that the next result applies in particular to self-dual and conjugate self-dual $H$-bent vectors.

\begin{proposition}\label{neccesaryconditionE3}
Suppose that $H \in \BH(n,3)$ and that ${\bf x}$ is $H$-bent. 
If there exists an index $i$ such that $(H{\bf x})_{i} \in \langle \zeta_{3} \rangle$ then $n = 9m^{2}$ for integer $m$.
\end{proposition}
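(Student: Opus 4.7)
The plan is to derive two independent arithmetic facts about $n$: (a) that $n$ is a perfect square, by pure algebraic integrality; and (b) that $3 \mid n$, by the standard row-orthogonality argument for Butson matrices. These combine via prime divisibility in $\mathbb{Z}$ to give $n = 9m^{2}$.

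For (a), observe that $(H\mathbf{x})_{i} = \sum_{j} h_{ij}x_{j}$ is a sum of products of cube roots of unity, hence an element of $\mathbb{Z}[\zeta_{3}]$. The bent condition gives $|(H\mathbf{x})_{i}| = \sqrt{n}$, and the hypothesis forces $(H\mathbf{x})_{i} = \sqrt{n}\,\zeta_{3}^{k}$ for some integer $k$ (one reads the stated condition $(H\mathbf{x})_{i}\in\langle\zeta_{3}\rangle$ as the only modulus-compatible interpretation $y_{i} = (H\mathbf{x})_{i}/\sqrt{n}\in\langle\zeta_{3}\rangle$, since elements of $\langle\zeta_{3}\rangle$ have absolute value $1$). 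Multiplying by $\zeta_{3}^{-k}$ places $\sqrt{n}$ inside $\mathbb{Z}[\zeta_{3}]$; but $\sqrt{n}$ is a real algebraic integer and $\mathbb{Z}[\zeta_{3}]\cap\mathbb{R} = \mathbb{Z}$, so $\sqrt{n} = r$ for some positive integer $r$, and $n = r^{2}$.

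For (b), assume $n \geq 2$ (otherwise the hypothesis is trivial and the conclusion vacuous). Select any two distinct rows of $H$; their Hermitian inner product equals $0$ and is a sum of $n$ cube roots of unity. Letting $n_{t}$ be the number of summands equal to $\zeta_{3}^{t}$, we have $n_{0} + n_{1} + n_{2} = n$ and $n_{0} + n_{1}\zeta_{3} + n_{2}\zeta_{3}^{2} = 0$. Since the only $\mathbb{Q}$-linear relation among $\{1,\zeta_{3},\zeta_{3}^{2}\}$ is $1+\zeta_{3}+\zeta_{3}^{2}=0$, we get $n_{0} = n_{1} = n_{2} = n/3$, so $3 \mid n$.

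Combining (a) and (b): $3 \mid r^{2}$ and $3$ prime give $3 \mid r$, so writing $r = 3m$ yields $n = 9m^{2}$. I expect no serious obstacle: the argument is entirely elementary and splits into two independent observations; the only minor friction is the reading of the hypothesis, which must be normalised by $\sqrt{n}$ to be meaningful, and the fact that the heavier algebraic-number-theoretic machinery of Theorems \ref{cycprime} and \ref{BJLThm}, although natural for this section, is not actually needed for $k=3$ because $\mathbb{Z}[\zeta_{3}]$ is small enough that $\mathbb{Z}[\zeta_{3}]\cap\mathbb{R}=\mathbb{Z}$ does the work directly.
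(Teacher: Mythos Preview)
Your proof is correct and takes a genuinely different route from the paper's. The paper works directly with the single equation $(H\mathbf{x})_{i} = s_{0} + s_{1}\zeta_{3} + s_{2}\zeta_{3}^{2} = \sqrt{n}\,\zeta_{3}^{j}$: after cyclically normalising so that $s_{0}$ is minimal and reducing via $1+\zeta_{3}+\zeta_{3}^{2}=0$, it solves the resulting two-equation system explicitly to obtain $s_{0} = (n\pm\sqrt{n})/3$, and then asserts that integrality of $s_{0}$ forces $n=9m^{2}$. Your argument instead decouples the two ingredients: part~(a) extracts $n$ square in one line from $\sqrt{n}\in\mathbb{Z}[\zeta_{3}]\cap\mathbb{R}=\mathbb{Z}$, and part~(b) supplies $3\mid n$ from the standard Butson order restriction on $\BH(n,3)$, then combines them arithmetically.

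Your version is cleaner and, in fact, more complete: the paper's final deduction that integrality of $(n\pm\sqrt{n})/3$ forces $9\mid n$ is not valid on its own (e.g.\ $n=16$ gives $s_{0}=(16-4)/3=4\in\mathbb{Z}$), and tacitly relies on exactly the $3\mid n$ fact that you make explicit in~(b). The paper's approach does have the minor advantage of exhibiting the actual multiplicities $s_{0},s_{1},s_{2}$, which could be useful elsewhere. Both arguments share the same harmless edge case at $n=1$; your parenthetical there should read that the \emph{proposition itself} is vacuous at $n=1$ rather than that the conclusion is.
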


\begin{proof}
By hypothesis $(H{\bf x})_{i}$ has the form
\[ s_0 + s_1\zeta_3 + s_2 \zeta_3^2 = \sqrt{n}\,\zeta_3^j \]
where $s_0+s_1+s_2=n$ and $j\in{0,1,2}$. Multiplying both sides by a suitable power of $\zeta_{3}$ we may assume that $s_0=\min\{s_0,s_1,s_2\}$. Since $1 = -\zeta_{3}-\zeta_{3}^{2}$ the former sum reduces to
\[ (s_1-s_0)\zeta_3 + (s_2-s_0) \zeta_3^2 = \sqrt{n}\,\zeta_3^j\,.\]
This equation is satisfied if either $s_1=s_0$ or $s_2=s_0$. In the first case $s_1=s_0$ and $s_2>s_0$. This leaves two equations:
\[\begin{array}{rl}
    (s_2-s_0)^2= & n,  \\
     2 s_0+s_2= & n.
\end{array}\]
The first identity says that $n$ should be a square. Substituting $s_{2} = n-2s_{0}$ yields
\[
(n-3s_{0})^{2} = n,
\]
for which the solutions are $s_0=\displaystyle\frac{n\pm\sqrt{n}}{3}$. Since $s_{0}$ is necessarily an integer, it follows that $n= 9m^{2}$ for integer $m$. The other case is similar.
\end{proof}

The proof of the next result is analogous, and is hence omitted. Note that every real Hadamard matrix belongs also to $\BH(n, 4)$.

\begin{proposition}[cf. \cite{SCGR21}]
Suppose that $H \in \BH(n,4)$ and that ${\bf x}$ is $H$-bent. 
If there exists an index $i$ such that $(H{\bf x})_{i} \in \langle \zeta_{4} \rangle$ then $n = 4m^{2}$ for integer $m$.
\end{proposition}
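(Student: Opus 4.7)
My plan is to imitate the argument of Proposition~\ref{neccesaryconditionE3} with $k=4$ in place of $k=3$, taking advantage of the fact that $\mathbb{Z}[\zeta_{4}] = \mathbb{Z}[i]$ has $\{1,i\}$ as a $\mathbb{Z}$-basis, which simplifies the bookkeeping considerably.

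The first step is to write $(H{\bf x})_i$ as an integer combination of the fourth roots of unity. Since each of the $n$ products $h_{i,t}x_t$ lies in $\langle\zeta_{4}\rangle$, one gets
\[ (H{\bf x})_i \;=\; s_0 + s_1\zeta_{4} + s_2\zeta_{4}^{2} + s_3\zeta_{4}^{3} \;=\; (s_0 - s_2) + (s_1 - s_3)\,i, \]
where $s_r \geq 0$ counts how many products take the value $\zeta_{4}^r$, and $s_0 + s_1 + s_2 + s_3 = n$. By hypothesis this equals $\sqrt{n}\,\zeta_{4}^j$ for some $j \in \{0,1,2,3\}$; multiplying by $\zeta_{4}^{-j}$ as in the cited proof we reduce to the case $(H{\bf x})_i = \sqrt{n}$. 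The $\mathbb{Q}$-linear independence of $1$ and $i$ then forces $s_0 - s_2 = \sqrt{n}$ and $s_1 = s_3$. Since $s_0 - s_2 \in \mathbb{Z}$, we deduce $\sqrt{n} \in \mathbb{Z}$, so $n = \ell^{2}$ for some non-negative integer $\ell$.

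The second step is to upgrade $n = \ell^{2}$ to $n = 4m^{2}$ by proving that $\ell$ is even. These equations alone do not suffice---they are consistent modulo $2$ for every $\ell$---so I would invoke the Butson property of $H$: for $n \geq 2$, the Hermitian inner product of any two distinct rows of $H$ is a sum of $n$ terms in $\langle\zeta_{4}\rangle$ equal to $0$. Decomposing this sum into real and imaginary parts exactly as above yields $t_0 = t_2$, $t_1 = t_3$, and hence $n = 2(t_0 + t_1)$, so $n$ is even. An even perfect square has an even square root, so $\ell = 2m$ and $n = 4m^{2}$, as required.

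The main subtlety, and essentially the only meaningful difference from the $k=3$ analogue, is this final parity step. For $k=3$, the relation $1 + \zeta_{3} + \zeta_{3}^{2} = 0$ supplies an additional constraint that already forces the divisibility by a factor of $3$ via the quadratic $s_0 = (n \pm \sqrt{n})/3$; for $k=4$, by contrast, the relations $1 + \zeta_{4}^{2} = 0$ and $\zeta_{4} + \zeta_{4}^{3} = 0$ split cleanly into real and imaginary parts, so the internal bookkeeping only forces $n$ to be a square and not $n$ to be divisible by $4$. The extra factor of $4$ is then a separate, but very cheap, consequence of row orthogonality in $\BH(n,4)$.
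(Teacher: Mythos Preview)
Your proof is correct and is precisely the natural analogue of Proposition~\ref{neccesaryconditionE3}; the paper omits the proof entirely, saying only that it ``is analogous, and is hence omitted''. Your identification of the extra parity step---that the factor of $4$ comes from the Butson condition on $H$ rather than from the bent equation alone---is exactly the point that needs to be made explicit for $k=4$.

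One small correction to your closing commentary: the $k=3$ argument is not actually as self-contained as you suggest. Once one fixes the sign (with $s_{0}$ the minimum one has $s_{0}=(n-\sqrt{n})/3$), integrality of $s_{0}$ only forces $\sqrt{n}\not\equiv 2\pmod 3$, not $3\mid\sqrt{n}$; for instance $n=16$ gives the perfectly good integer solution $s_{0}=s_{1}=4$, $s_{2}=8$. What rules out such $n$ is again the Butson hypothesis: $H\in\BH(n,3)$ with $n\geq 2$ forces $3\mid n$ (by the same row-orthogonality count you use for $k=4$), and a square divisible by $3$ is divisible by $9$. So in fact your parity step is \emph{exactly} parallel to what is implicitly happening in the $k=3$ proof---you have simply made the dependence on the ambient Butson condition explicit, which is a virtue.
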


\subsection{Further non-existence results} 

In this subsection we explore the self-conjugacy condition to deduce 
strong restrictions on bent vectors. 

\begin{theorem}\label{pro_nonexisten_bentsequences}
Suppose that $k$ is an integer, that ${\bf x}$ is $H$-bent for $H\in \BH(n,k)$. If $k_{p} = 1$, and $n_{p} > 1$ and $p$ is self-conjugate modulo $k$ then $n_{p}$ is a square.
\end{theorem}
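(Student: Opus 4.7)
The plan is to extract a single scalar norm equation from the bent condition and then apply the standard self-conjugacy valuation argument already used at the end of Section~\ref{preliminaries} to bound $\det(H)$.

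Concretely, I would fix any index $i$ and consider the single entry $\alpha = (H{\bf x})_{i}$. Since $H$ has entries in $\langle \zeta_{k}\rangle$ and so does ${\bf x}$, we automatically have $\alpha \in \mathbb{Z}[\zeta_{k}]$. The bent hypothesis $H{\bf x} = \sqrt{n}\,{\bf y}$ with $|y_{i}| = 1$ then gives, upon taking absolute squares, the cyclotomic integer identity
\[ \alpha \overline{\alpha} = n. \]

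Next I would invoke Theorem~\ref{cycprime}: because $k_{p} = 1$, the prime $p$ is unramified in $\mathbb{Z}[\zeta_{k}]$ and factors as $p\mathbb{Z}[\zeta_{k}] = \mathfrak{p}_{1}\cdots\mathfrak{p}_{g}$ with the $\mathfrak{p}_{i}$ pairwise distinct. Self-conjugacy of $p$ modulo $k$ together with Theorem~\ref{BJLThm} says that each $\mathfrak{p}_{i}$ is fixed by complex conjugation, and consequently the discrete valuation $v = v_{\mathfrak{p}_{1}}$ satisfies $v(\overline{\beta}) = v(\beta)$ for every $\beta \in \mathbb{Z}[\zeta_{k}]$. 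Applying $v$ to both sides of $\alpha \overline{\alpha} = n$ yields
\[ 2\,v(\alpha) = v(\alpha) + v(\overline{\alpha}) = v(n) = v_{p}(n), \]
where the last equality uses that $v(p) = 1$ (unramified) and that the prime-to-$p$ part of $n$ is coprime to $\mathfrak{p}_{1}$. Hence $v_{p}(n)$ is even, so $n_{p} = p^{v_{p}(n)}$ is a square, as claimed.

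There is no real obstacle here: the argument is formally the same as the determinant argument at the end of Section~\ref{preliminaries}, but applied to a single scalar $\alpha$ rather than to $\det(H)$. The only point worth flagging in a careful write-up is that we use just \emph{one} coordinate of $H{\bf x}$, which suffices because that single entry already produces a norm equation $\alpha\overline{\alpha} = n$ in $\mathbb{Z}[\zeta_{k}]$; in particular neither the self-dual nor the conjugate self-dual hypothesis is required, and the assumption $n_{p} > 1$ serves only to make the conclusion non-trivial.
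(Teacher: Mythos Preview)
Your argument is correct and is essentially the paper's own proof: both pick a single coordinate $\alpha=(H{\bf x})_{j}$, use the bent hypothesis to obtain $\alpha\overline{\alpha}=n$, and then apply the self-conjugacy/unramified factorisation of $p\mathbb{Z}[\zeta_{k}]$ to force the $\mathfrak{p}$-adic valuation of $n$ to be even. The only cosmetic difference is that you phrase the last step in terms of the discrete valuation $v_{\mathfrak{p}_{1}}$ while the paper speaks of ``$\mathfrak{p}_{i}$ dividing to the same power''.
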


\begin{proof}
The ideal $p\mathbb{Z}[\zeta_{k}]$ is unramified, say that it is the product of distinct prime ideals $\mathfrak{p}_{i}$ for $1\leq i \leq g$. Since $p$ is self-conjugate modulo $k$, each $\mathfrak{p}_{i}$ is fixed by complex conjugation.

Consider the prime ideal factorisation of both side of $(H{\bf x})_{j} (H{\bf x})_{j}^{\ast} = n$. Since $\mathfrak{p}_{i}$ divides $(H{\bf x})_{j}$ to the same power as $\mathfrak{p}_{i}^{\ast} = \mathfrak{p}_{i}$ divides $(H{\bf x})_{j}^{\ast}$, in particular the power of $\mathfrak{p}_{i}$ dividing the left hand side is even. This holds for every $\mathfrak{p}_{i}$, and since $p$ is unramified, $p$ must divide the left hand side to an even power. But then $n_{p}$ must be an even power of $p$, and the result follows.
\end{proof}

Taking $p = 2$ yields the following result.

\begin{corollary}
Suppose that $k \equiv 2 \mod 4$ and that ${\bf x}$ is $H$-bent for $H\in \BH(n,k)$. If $2$ is self-conjugate modulo $k$ then $n_{2}$ is a square.
\end{corollary}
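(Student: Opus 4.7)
The plan is to run the ideal-theoretic argument of Theorem~\ref{pro_nonexisten_bentsequences} with $p = 2$, after dispensing with one technical issue: the literal hypothesis $k_p = 1$ of that theorem is not satisfied here, since $k \equiv 2 \pmod 4$ forces $k_2 = 2$. So the first step is to verify the two properties of primes above $2$ that the argument actually uses, namely that $2$ is unramified in $\mathbb{Z}[\zeta_k]$ and that each prime ideal above $2$ is fixed by complex conjugation.

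For unramification, Theorem~\ref{cycprime} gives $2\mathbb{Z}[\zeta_k] = (\mathfrak{p}_1 \cdots \mathfrak{p}_g)^{\phi(k_2)}$, and since $\phi(2) = 1$ the factorisation is into distinct primes. (Equivalently, writing $k = 2m$ with $m$ odd, the identity $\mathbb{Z}[\zeta_k] = \mathbb{Z}[\zeta_m]$ exhibits $2$ as coprime to the conductor.) For the action of complex conjugation, self-conjugacy of $2$ modulo $k$ unpacks by definition to the existence of $j$ with $2^j \equiv -1 \mod k/k_2$, and Theorem~\ref{BJLThm} translates this into each $\mathfrak{p}_i$ being fixed by $\sigma^*$.

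With these two facts in hand, the rest is a verbatim repetition of the proof of Theorem~\ref{pro_nonexisten_bentsequences}. We may assume $n_2 > 1$, since otherwise $n_2 = 1$ is trivially a square. Fix any index $j$ and factor both sides of $(H\mathbf{x})_j (H\mathbf{x})_j^* = n$ as ideals in $\mathbb{Z}[\zeta_k]$; each $\mathfrak{p}_i = \mathfrak{p}_i^*$ appears to the same power on both sides of the conjugate pair, so it divides $n$ to an even power, and unramification of $2$ then forces $n_2$ itself to be an even power of $2$, hence a square. The only real obstacle — a very mild one — is the bookkeeping observation that the proof of Theorem~\ref{pro_nonexisten_bentsequences} genuinely requires only unramification of $p$ rather than the strictly stronger $k_p = 1$; once this is recognised, the case $p = 2$, $k_2 = 2$ slots in without additional work.
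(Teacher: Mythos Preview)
Your proof is correct and follows the same approach as the paper, which simply asserts ``Taking $p = 2$ yields the following result.'' In fact you are more careful than the paper: you explicitly note that $k_2 = 2 \neq 1$ so the literal hypothesis of Theorem~\ref{pro_nonexisten_bentsequences} fails, and then verify that the proof only needs unramification of $2$ (guaranteed here by Theorem~\ref{cycprime} since $4 \nmid k$) together with self-conjugacy, which is exactly the gap the paper glosses over.
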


Finally, we use self-conjugacy to deduce properties of ${\bf y}$ from ${\bf x}$. 

\begin{theorem}\label{teorema_principal}
Let $H\in\BH(n,k)$ and let ${\bf x}$ be $H$-bent with entries in $\langle \zeta_{k} \rangle$, and let ${\bf y} = \frac{1}{\sqrt{n}}H{\bf x}$. If $n$ is self-conjugate modulo $k$, then every entry of ${\bf y}$ belongs to $\langle \zeta_{2k} \rangle$ if $k$ is even and $\langle \zeta_{4k} \rangle$ if $k$ is odd. 
\end{theorem}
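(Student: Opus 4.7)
The plan is to show first that $y_i^{2}$ is a cyclotomic integer of absolute value $1$ lying in $\mathbb{Z}[\zeta_k]$, and then to extract $y_i$ by taking a square root. Fix an index $i$ and set $\alpha = (H{\bf x})_{i}$, so $\alpha \in \mathbb{Z}[\zeta_k]$ and $\alpha\alpha^{\ast} = n$ because $\alpha = \sqrt{n}\,y_i$ with $|y_i|=1$. The central claim is
\[ y_{i}^{2} \;=\; \frac{\alpha^{2}}{n} \;\in\; \mathbb{Z}[\zeta_k]. \]
Once this is established, $y_i^{2}$ is an algebraic integer of norm $1$ in $\mathbb{Z}[\zeta_k]$, so Proposition \ref{prop:cycint} forces $y_i^{2} = \pm\zeta_k^{t}$ for some integer $t$.

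To prove integrality of $\alpha^{2}/n$, I would compare $\mathfrak{p}$-adic valuations at every prime of $\mathbb{Z}[\zeta_k]$. At primes $\mathfrak{q}$ not lying above any rational prime dividing $n$ both sides are non-negative, so nothing needs checking. For a rational prime $p \mid n$, Theorem \ref{cycprime} gives $p\mathbb{Z}[\zeta_k] = (\mathfrak{p}_{1}\cdots\mathfrak{p}_{g})^{\phi(k_{p})}$; because $n$ is self-conjugate modulo $k$, Theorem \ref{BJLThm} tells us that each $\mathfrak{p}_{i}$ is fixed by complex conjugation. Since the action of $\sigma^{\ast}$ preserves $\mathfrak{p}_i$-adic valuation, we have $v_{\mathfrak{p}_i}(\alpha^{\ast}) = v_{\mathfrak{p}_i}(\alpha)$, so comparing the two sides of $\alpha\alpha^{\ast} = n$ at $\mathfrak{p}_{i}$ yields
\[ 2 v_{\mathfrak{p}_{i}}(\alpha) \;=\; v_{\mathfrak{p}_{i}}(n), \qquad\text{hence}\qquad v_{\mathfrak{p}_{i}}(\alpha^{2}/n) \;=\; 0. \]
(In passing this recovers Theorem \ref{pro_nonexisten_bentsequences}, since $v_{\mathfrak{p}_i}(\alpha)$ being an integer forces $v_p(n)\phi(k_p)$ to be even.)

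Finally I would extract $y_i$ by a square-root calculation. If $k$ is even then $-1 = \zeta_k^{k/2} \in \langle\zeta_k\rangle$, so $\pm\zeta_k^{t}\in\langle\zeta_k\rangle$; the two square roots of any element of $\langle\zeta_k\rangle$ lie in $\langle\zeta_{2k}\rangle$, and so $y_i \in \langle\zeta_{2k}\rangle$. If $k$ is odd then $-\zeta_k^{t} = \zeta_{2k}^{k+2t}$, whence $y_i^{2} \in \langle\zeta_{2k}\rangle$ and its square roots lie in $\langle\zeta_{4k}\rangle$.

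The main subtlety is the valuation computation at the ramified primes $p \mid k$: one must keep track of the ramification index $\phi(k_p)$ simultaneously with the identification $\mathfrak{p}_i^{\ast} = \mathfrak{p}_i$. However, the ramification structure from Theorem \ref{cycprime} together with the complex-conjugation criterion of Theorem \ref{BJLThm} (applied with $m = k/k_p$, which is exactly where the self-conjugacy hypothesis lives) handles this bookkeeping cleanly, and the remainder of the argument is formal.
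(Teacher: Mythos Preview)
Your proof is correct and follows essentially the same route as the paper: both show that $\alpha^{2}/n$ is an algebraic integer in $\mathbb{Z}[\zeta_k]$ (you via $\mathfrak{p}$-adic valuations, the paper via the ideal equality $\alpha\,\mathbb{Z}[\zeta_k]=\alpha^{\ast}\mathbb{Z}[\zeta_k]$ and hence $\alpha^{2}\mathbb{Z}[\zeta_k]=n\mathbb{Z}[\zeta_k]$), then invoke Proposition~\ref{prop:cycint} to conclude $y_i^{2}=\pm\zeta_k^{t}$, and finally extract square roots. The valuation formulation is a cosmetic rephrasing of the ideal-theoretic argument, and your square-root case analysis for $k$ even versus odd matches the paper's.
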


\begin{proof}
From the definition of an $H$-bent vector,
\[ y_{i}y_{i}^{\ast} = \left( \sum_{j=1}^{n} h_{ij} x_{j}\right)\sigma^{\ast} \left( \sum_{j=1}^{n} h_{ij} x_{j}\right) = n \]
By hypothesis, $n$ is self-conjugate modulo $k$, so every prime ideal above $n$ in $\mathbb{Z}[\zeta_{k}]$ is fixed by complex conjugation. Hence, $y_{i}\mathbb{Z}[\zeta_{k}] = y_{i}^{\ast}\mathbb{Z}[\zeta_{k}]$. Consequently, we have the following equality of ideals:
\[ y_{i}^{2} \mathbb{Z}[\zeta_{k}] = n\mathbb{Z}[\zeta_{k}]\,. \]
Two principal ideals are equal if and only if they differ by a unit. The only units in $\mathbb{Z}[\zeta_{k}]$ are roots of unity by Proposition \ref{prop:cycint} and hence $y_{i}^{2}\zeta_{k}^{j} = n$ for some $j$. It follows that $\sqrt{y_{i}^{2}/n} = \pm \zeta_{2k}^{j}$ is also a root of unity, since its square is $\pm \zeta_{k}^{j}$. Note that $-\zeta_{k} \in \langle \zeta_{2j}\rangle$ if $k$ is odd, and $-\zeta_{k} \in \langle \zeta_{k}\rangle$ if $k$ is even. This concludes the proof. 
\end{proof}

\subsection{Non-existence of circulant Butson matrices}

We continue with an application to circulant and group-invariant matrices. Recall that an $n \times n$ matrix $H$ is \textit{circulant} if and only if $h_{i,j} = h_{i+1, j+1}$ for all $0 \leq i,j \leq n-1$ with indices interpreted modulo $n$. It is well-known that $H$ is circulant if and only if all columns of the $n \times n$ Fourier matrix $F(C_{n})$ are eigenvectors of $H$, \cite{Davis}. Conversely, any bent function for $F(C_{n})$ corresponds to a circulant Hadamard matrix, as in the following theorem.

\begin{theorem}
Suppose that ${\bf x}$ is $F(C_{n})$-bent. Then $H = [ \textbf{x}_{i-j} ]_{0 \leq i,j\leq n-1}$ is a circulant Hadamard matrix, where indices are interpreted modulo $n$.
\end{theorem}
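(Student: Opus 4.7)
The plan is to break the theorem into three assertions: that $H$ is circulant, that its entries have modulus $1$, and that $HH^{\ast} = nI_n$. The first is immediate from the defining formula $h_{i,j} = x_{i-j}$, since $h_{i+1,j+1} = x_{(i+1)-(j+1)} = x_{i-j} = h_{i,j}$ with indices modulo $n$. The second is immediate from $x_i \in \langle \zeta_n \rangle$. So the substance of the proof is the orthogonality of rows.

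My first step would be to reduce the orthogonality of $H$ to a statement about the autocorrelation of $\mathbf{x}$. A direct computation gives
\[
(HH^{\ast})_{i,k} \;=\; \sum_{j=0}^{n-1} x_{i-j}\,\overline{x_{k-j}} \;=\; \sum_{l=0}^{n-1} x_{(i-k)+l}\,\overline{x_{l}} \;=:\; R(i-k),
\]
where I have substituted $l = k-j$ and all indices are read modulo $n$. So the assertion $HH^{\ast} = nI_n$ is equivalent to the autocorrelation identity $R(m) = n\delta_{m,0}$ for $0 \le m \le n-1$.

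Next I would exploit the $F(C_n)$-bent hypothesis, which says exactly that $|(F(C_n)\mathbf{x})_i|^{2} = n$ for every $i$. Applying the standard Wiener--Khinchin identity (which is a direct consequence of the orthogonality relations $\sum_i \zeta_n^{i(j-k-m)} = n\delta_{j,k+m}$) gives
\[
\sum_{i=0}^{n-1} \zeta_n^{-im}\,|(F(C_n)\mathbf{x})_i|^{2} \;=\; n\,R(m).
\]
By the bent hypothesis the left-hand side equals $n\sum_i \zeta_n^{-im}$, which is $n^{2}$ if $m\equiv 0 \pmod n$ and $0$ otherwise. Thus $R(m) = n\delta_{m,0}$, which by the reduction above is precisely $HH^{\ast} = nI_n$.

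There is no real obstacle here: once one recognises that $|F(C_n)\mathbf{x}|^{2}$ is a constant vector and that this constant vector is, up to a scalar, the Fourier transform of the autocorrelation $R$, the conclusion follows by inverting the Fourier transform. The only care required is bookkeeping of indices modulo $n$ and of the normalising factor $n$ coming from $F(C_n)F(C_n)^{\ast} = nI_n$ (since the paper's $F(C_n)$ is not unitary but satisfies $F(C_n)^{\ast}F(C_n) = nI_n$).
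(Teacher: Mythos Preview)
Your proof is correct. The paper's argument, however, is shorter and more structural: it notes that the eigenvalues of the circulant matrix $H$ are precisely the entries of $F(C_n)\mathbf{x}$ (the standard diagonalisation of circulants by the Fourier matrix), which by the bent hypothesis all have absolute value $\sqrt{n}$; a matrix with unit-modulus entries whose eigenvalues all lie on the circle of radius $\sqrt{n}$ is necessarily Hadamard (it attains Hadamard's determinant bound, or equivalently, being normal, satisfies $HH^{\ast}=nI$). Your route instead computes $HH^{\ast}$ explicitly as the autocorrelation of $\mathbf{x}$ and then invokes the Wiener--Khinchin identity to recognise this autocorrelation as the inverse Fourier transform of the constant power spectrum. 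The two arguments are dual --- the eigenvalue viewpoint and the autocorrelation viewpoint are linked precisely by the diagonalisation $F(C_n)^{\ast} H F(C_n) = n\,\mathrm{diag}(F(C_n)\mathbf{x})$ --- but yours is more self-contained and does not ask the reader to recall the spectral theory of circulant matrices, while the paper's is terser and foregrounds the conceptual reason (eigenvalues of equal modulus) for the Hadamard property.
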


\begin{proof}
The entries of $H$ have unit norm, and the eigenvalues of $H$ are the entries of $F(C_{n}){\bf x}$, which have norm $n$. Such a matrix is necessarily Hadamard, \cite[Section 1]{Maxdet}.
\end{proof}

As an illustration of the utility of bent vectors for studying circulant Hadamard matrices, we give a new proof of the following previously-known result.

\begin{corollary}
There does not exist a real circulant Hadamard matrix of order $n = 4p^{2}$ for any prime $p \equiv 3 \mod 8$.
\end{corollary}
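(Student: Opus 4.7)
The plan is as follows. First, by the preceding theorem, a real circulant Hadamard matrix of order $n=4p^{2}$ corresponds to a $\pm 1$-vector $\mathbf{x}$ that is $F(C_{n})$-bent; I aim to derive a contradiction from the existence of such an $\mathbf{x}$. Since $F(C_{n})\in\BH(n,n)$, Theorem~\ref{teorema_principal} will be applied with $k=n=4p^{2}$.

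The crux is to verify that $n$ is self-conjugate modulo $k=n$ under the hypothesis $p\equiv 3\pmod 8$. Unpacking the definition, I need (a)~$p$ self-conjugate modulo $k/k_{p}=4$ and (b)~$2$ self-conjugate modulo $k/k_{2}=p^{2}$. Part (a) is immediate since $p\equiv -1\pmod 4$. For part (b), the supplement to quadratic reciprocity says $2$ is a quadratic non-residue modulo $p$ precisely when $p\equiv \pm 3\pmod 8$, so $\mathrm{ord}_{p}(2)$ is even; and since $|(\mathbb{Z}/p^{2})^{\times}|=p(p-1)$ with $p$ odd, $\mathrm{ord}_{p^{2}}(2)$ equals either $\mathrm{ord}_{p}(2)$ or $p\cdot\mathrm{ord}_{p}(2)$, both even, so $-1\in\langle 2\rangle$ in $(\mathbb{Z}/p^{2})^{\times}$.

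Once this is established, Theorem~\ref{teorema_principal} places every entry of $\mathbf{y}=\frac{1}{2p}F(C_{n})\mathbf{x}$ in $\langle\zeta_{2n}\rangle$. But these entries also lie a priori in $\mathbb{Q}(\zeta_{n})$, and for $n$ even the roots of unity in $\mathbb{Q}(\zeta_{n})$ are exactly $\langle\zeta_{n}\rangle$; hence each
\[ a_{i}:=(F(C_{n})\mathbf{x})_{i}=2p\,\zeta_{n}^{t_{i}} \]
for some integer exponent $t_{i}$. Setting $D=\{j:x_{j}=-1\}$, this amounts to saying that $D$ is a Menon difference set with parameters $(4p^{2},\,2p^{2}\pm p,\,p^{2}\pm p)$ in $C_{4p^{2}}$, and that every nontrivial character value of $D$ has the form $\pm p\,\zeta_{n}^{s}$.

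The main obstacle is converting this strong spectral rigidity into a contradiction. My plan is to project $D$ onto the two quotients $C_{n}/C_{p^{2}}\cong C_{4}$ and $C_{n}/C_{4}\cong C_{p^{2}}$, exploiting respectively the inertness of $p$ in $\mathbb{Z}[i]$ (a consequence of $p\equiv 3\pmod 4$), which confines the $C_{4}$-character values of the coset-intersection multiset to $\{\pm p,\pm ip\}$, and the total ramification $(p)=\mathfrak{p}^{p(p-1)}$ in $\mathbb{Z}[\zeta_{p^{2}}]$, which forces the $C_{p^{2}}$-character values to be $\pm p\,\zeta_{p^{2}}^{s}$. Combining the integrality constraints on the coset-intersection numbers $d_{k}=|D\cap(k+C_{p^{2}})|$ and their duals from both projections should yield the desired contradiction, reproducing Turyn's classical argument in the bent-vector framework.
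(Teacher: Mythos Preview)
Your setup and the verification that $n=4p^{2}$ is self-conjugate modulo itself are essentially the same as the paper's; the only cosmetic difference is that you apply Theorem~\ref{teorema_principal} to $F(C_{n})$ with the $\pm1$ first row of $H$ as the bent vector, whereas the paper applies it to $H$ with the columns of $F(C_{n})$ as bent vectors. Either way one lands on the same conclusion: every eigenvalue $a_{i}=(F(C_{n})\mathbf{x})_{i}$ satisfies $a_{i}/(2p)\in\langle\zeta_{n}\rangle$.

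Where you diverge is in the endgame. The paper does not go through difference sets or Turyn's multiplier machinery at all; once the eigenvalues of $\frac{1}{2p}H$ are known to be roots of unity, that matrix has finite multiplicative order, and a short lemma of Craigen and Kharaghani \cite[Lemma~4]{CraKha93} says this is impossible for a real circulant Hadamard matrix of order larger than~$4$. That is the entire contradiction. Your proposed finish---projecting the putative Menon difference set onto $C_{4}$ and $C_{p^{2}}$ and ``reproducing Turyn's classical argument''---would ultimately work, but as written it is only a sketch: you have not actually carried out the coset-count contradiction, and the details (e.g.\ why the constraints on the $d_{k}$ are inconsistent) are precisely where the substance lies. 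Since you have already reached the root-of-unity eigenvalue statement, the Craigen--Kharaghani route is both shorter and complete; I would recommend swapping it in for your difference-set plan.
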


\begin{proof}
Consider $H \in \BH(n,n)$, where $H$ is circulant. Then any column ${\bf x}$ of $F(C_{n})$ is an eigenvector of $H$, i.e., is $H$-bent, and the corresponding eigenvalue is $\lambda = (H{\bf x})_{1}$.

We observe that $4p^{2}$ is self-conjugate modulo $4p^{2}$: by hypothesis $p$ is self-conjugate modulo $4$ since $p \equiv 3 \mod 4$. Using standard results on quadratic reciprocity, $2$ is a quadratic non-residue modulo $p$ when $p \equiv 3 \mod 8$. Hence $2^{\frac{p-1}{2}} = ap-1$ for some integer $a$. It follows that
\[ \left(2^{\frac{p-1}{2}}\right)^{p} \equiv (ap-1)^p \equiv -1 \mod p^{2}. \]
Hence there exists an integer $f$ such that $2^{f} \equiv -1 \mod p^{2}$. Theorem \ref{teorema_principal} now implies that $\lambda/{2p}$ is a root of unity. If the eigenvalues of $\frac{1}{2p}H$ are all roots of unity, then this matrix has finite multiplicative order. This contradicts a result of Craigen and Kharaghani showing that a real circulant Hadamard matrix of order $n > 4$ (scaled to be unitary) cannot have finite multiplicative order \cite[Lemma 4]{CraKha93}.
\end{proof}

Hiranandani and Schlenker classified the circulant matrices in $\BH(n,k)$ up to equivalence under cyclic permutations and scalar multiplication (by a $k^{\rm th}$ root of unity) for all $n,k \leq 12$ (and some larger values), \cite{HirSch}. Note that this is stronger than Hadamard equivalence as described in Section \ref{sec:Butson}. They found 519 inequivalent circulant Butson matrices. For each matrix $M \in \BH(n,k)$ in this classification, we found that the multiplicative order of $\frac{1}{\sqrt{n}}M$ divides $\mathrm{lcm}(4,n,k)$. Hence every eigenvalue of $\frac{1}{\sqrt{n}}M$ is a root of unity. Denoting by ${\bf x}$ the transpose of the first row of $H$ and by $\sqrt{n}{\bf y}$ the corresponding vector of eigenvectors, we find many examples of bent pairs for the Fourier matrices $F(C_{n})$. Curiously, there is currently no known example of a circulant Hadamard matrix $H \in \BH(n, k)$ such that $\frac{1}{\sqrt{n}}H$ has infinite multiplicative order. This motivates the following question, posed for more general abelian groups.

\begin{question}\label{q:Root}
Let $G$ be an abelian group of order $n$, and suppose that ${\bf x}$ is $F(G)$-bent. Must the entries of $\frac{1}{\sqrt{n}}F(G) {\bf x}$ be roots of unity?
\end{question}

A positive answer to Question \ref{q:Root}, together with the result of Craigen and Kharaghani quoted above would imply a proof of Ryser's circulant Hadamard conjecture: that a real circulant Hadamard matrix must have order $4$.

\section{Construction of conjugate self-dual bent vectors}\label{constructions}
 In this section, we will focus on the equation
\begin{equation}
H\mathbf{x}=\sqrt{n} \overline{\mathbf{x}}.
\end{equation}
Concretely, we will explicitly provide conjugate self-dual $H$-bent vectors $X$ for some matrices $H\in \BH(n,k)$. 
Representatives of three equivalence classes of $\BH(9,3)$
are provided in \cite{EFO15}. These may be accessed online at \url{https://www.daneflannery.com/classifying-cocyclic-butson-hadamard-matrices}. We verified computationally that there exist conjugate self-dual $H$-bent vectors for each of these representatives.

\subsection{Bent vectors for tensor products}

Define $\Phi$ to be the vectorisation map, i.e. $\Phi(H) = {\bf x}$ with $x_{(i-1)n+j}=h_{i,j}$. First we give an easy composition result.

\begin{lemma}
    If $\mathbf{x}$ and $\mathbf{y}$ are conjugate self-dual $H$-bent and $K$-bent respectively, for $H\in \BH(n,k)$ and $K\in \BH(m,k)$, then $\mathbf{x}\otimes \mathbf{y}$ is conjugate self-dual $(H\otimes K)$-bent.
\end{lemma}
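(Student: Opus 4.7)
The plan is to use the mixed-product property of the Kronecker product, which gives $(A\otimes B)(\mathbf{u}\otimes\mathbf{v}) = (A\mathbf{u})\otimes(B\mathbf{v})$ whenever the matrix-vector products make sense. Applying this to $H\otimes K$ acting on $\mathbf{x}\otimes\mathbf{y}$ reduces the statement to the two hypotheses we already have on $\mathbf{x}$ and $\mathbf{y}$ separately.

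Concretely, I would begin by unpacking the hypotheses: there exist $\lambda,\mu$ of modulus $1$ with $H\mathbf{x}=\sqrt{n}\,\lambda\,\overline{\mathbf{x}}$ and $K\mathbf{y}=\sqrt{m}\,\mu\,\overline{\mathbf{y}}$, and both $\mathbf{x}$ and $\mathbf{y}$ have entries in $\langle\zeta_k\rangle$. Then I would observe that $\mathbf{x}\otimes\mathbf{y}$ has entries in $\langle\zeta_k\rangle$ (entrywise products of $k$-th roots are $k$-th roots) and that $H\otimes K\in \BH(nm,k)$ (a standard fact: the Kronecker product of Butson Hadamard matrices with the same phase is again Butson of the same phase).

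Next I would compute
\[ (H\otimes K)(\mathbf{x}\otimes\mathbf{y}) \;=\; (H\mathbf{x})\otimes(K\mathbf{y}) \;=\; (\sqrt{n}\,\lambda\,\overline{\mathbf{x}})\otimes(\sqrt{m}\,\mu\,\overline{\mathbf{y}}) \;=\; \sqrt{nm}\,(\lambda\mu)\,(\overline{\mathbf{x}}\otimes\overline{\mathbf{y}}), \]
and then use the elementary identity $\overline{\mathbf{x}}\otimes\overline{\mathbf{y}}=\overline{\mathbf{x}\otimes\mathbf{y}}$ (which holds because complex conjugation is applied entrywise and the Kronecker product is formed from entrywise products). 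Since $|\lambda\mu|=1$, this exhibits $\mathbf{x}\otimes\mathbf{y}$ as conjugate self-dual $(H\otimes K)$-bent with dualising scalar $\lambda\mu$.

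There is essentially no obstacle here; the proof is a two-line application of the mixed-product rule together with compatibility of conjugation with tensor products. The only thing worth stating carefully is that the index convention for the tensor product used in $\Phi$ (defined just above the lemma) is consistent with the mixed-product identity, which it is in the standard convention where $\mathbf{x}\otimes\mathbf{y}$ has entry $x_iy_j$ in position $(i-1)m+j$.
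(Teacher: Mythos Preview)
Your proof is correct and is exactly the natural argument; the paper in fact omits the proof entirely, labelling the lemma ``an easy composition result,'' so there is nothing further to compare.
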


\begin{proposition}\label{prop-sd-fbh}
Let $A, B, M$ be $n\times n$ matrices. Then $ (A\otimes B)^*\Phi(M)=\Phi(A^*\; M \;\overline{B})$.
\end{proposition}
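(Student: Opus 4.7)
The plan is to verify the identity entrywise using the definition of the Kronecker product and the row-major vectorisation map $\Phi$. First I would record the basic fact $(A\otimes B)^{*}=A^{*}\otimes B^{*}$, together with the index formula
\[
(A^{*}\otimes B^{*})_{(i-1)n+j,\;(k-1)n+l}=\overline{a_{k,i}}\,\overline{b_{l,j}},
\]
which follows immediately from $(A\otimes B)_{(i-1)n+j,\,(k-1)n+l}=a_{i,k}b_{j,l}$. Since $\Phi(M)_{(k-1)n+l}=m_{k,l}$, expanding the left hand side gives
\[
\bigl((A\otimes B)^{*}\Phi(M)\bigr)_{(i-1)n+j}=\sum_{k,l}\overline{a_{k,i}}\,\overline{b_{l,j}}\,m_{k,l}.
\]

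On the right hand side I would just expand the triple matrix product entrywise:
\[
\Phi(A^{*}M\overline{B})_{(i-1)n+j}=(A^{*}M\overline{B})_{i,j}=\sum_{k,l}(A^{*})_{i,k}\,m_{k,l}\,(\overline{B})_{l,j}=\sum_{k,l}\overline{a_{k,i}}\,m_{k,l}\,\overline{b_{l,j}}.
\]
These two double sums agree term by term, which proves the identity.

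An equivalent but more conceptual route would be to observe that $\Phi(M)$ coincides with the standard (column-major) vectorisation of $M^{\top}$, and then apply the textbook identity $(Y\otimes Z)\operatorname{vec}(X)=\operatorname{vec}(ZXY^{\top})$ with $Y=A^{*}$, $Z=B^{*}$, $X=M^{\top}$; using $(A^{*})^{\top}=\overline{A}$ and $B^{*}=\overline{B}^{\top}$, one obtains $\operatorname{vec}(\overline{B}^{\top}M^{\top}\overline{A})$, which is precisely $\operatorname{vec}\bigl((A^{*}M\overline{B})^{\top}\bigr)=\Phi(A^{*}M\overline{B})$. I would probably include this as a remark rather than as the main argument, since the entrywise calculation is shorter and self-contained.

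The only real care needed, and the one source of possible confusion, is the appearance of $\overline{B}$ rather than $B^{*}$ on the right; this is forced by the fact that $\Phi$ reads entries row-by-row, so the second tensor factor acts from the right on $M$ via its transpose rather than its conjugate transpose. Once the convention is fixed at the indexing step, the rest is routine bookkeeping.
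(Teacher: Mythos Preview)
Your entrywise verification is correct and is essentially the same argument as the paper's own proof: both record $(A\otimes B)^{*}=A^{*}\otimes B^{*}$, expand both sides via the Kronecker-product index formula and the definition of $\Phi$, and match the resulting double sums. Your additional remark via the standard $\operatorname{vec}$ identity is a nice conceptual aside that the paper does not include, but it does not change the approach.
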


\begin{proof}
Let $\mathbf{x} = \Phi(M)$, so $x_{(i-1)n+j} = m_{i,j}$. Recall that the Kronecker product is bilinear and commutes with complex conjugation, hence $(A\otimes B)^{*}=A^* \otimes B^*$. By direct computation, the following products are equal:
\[\begin{array}{cc}
  \Big[ (A\otimes B)^*\Phi(M)\Big]_{(j-1)n+l}=\displaystyle\sum_i \bar{a}_{i,j}\sum_k \bar{b}_{k,l}x_{(i-1)n+k}, &  1\leq j,l\leq n;\\[5mm]
  \Big[ A^*\; M \;\overline{B}\Big]_{j,l}=\Big[A^*\; (M \;\overline{B})\Big]_{j,l}=  \displaystyle\sum_i \bar{a}_{i,j}\sum_k \bar{b}_{k,l}x_{(i-1)n+k}, &  1\leq j,l\leq n.
\end{array}
\]
The claim follows. 
\end{proof}

This result may be specialised in various ways; essentially it shows that tensor products often admit bent vectors. 

\begin{corollary}\label{tensors}
\begin{enumerate} 
\item If $H$ is Hadamard then the vector $\Phi(H)$ is conjugate self-dual $(H^{\ast}\otimes H^{\ast})$-bent.
\item If $H$ and $M$ are commuting Hadamard matrices then $\Phi(M)$ is self-dual $(H\otimes \overline{H})$-bent. 
\item If $H$ and $M$ are amicable Hadamard matrices, that is $HM^{\ast} = MH^{\ast}$, then $\Phi(M)$ is conjugate self-dual $(H\otimes H^{\top})$-bent if $M$ is symmetric.
\end{enumerate} 
\end{corollary}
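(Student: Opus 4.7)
My plan is to derive all three parts uniformly as specialisations of Proposition \ref{prop-sd-fbh}. In each case, I choose matrices $A$ and $B$ so that $A^{*}\otimes B^{*}$ equals the Kronecker product appearing in the statement, then simplify the matrix triple product $A^{*}M\overline{B}$ on the right-hand side using the Hadamard identity $H^{*}H = nI$ together with the extra structural hypothesis (Hadamard, commutativity, or amicability plus symmetry).

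For Part (1), I take $A=B=H$ and apply Proposition \ref{prop-sd-fbh} with the vectorisation argument set equal to $H$ itself: the left-hand side becomes $(H^{*}\otimes H^{*})\Phi(H)$, and the right-hand side is $\Phi(H^{*}H\overline{H}) = \Phi(n\overline{H}) = n\,\overline{\Phi(H)}$, which is exactly the conjugate self-dual bent condition at size $n^{2}$. For Part (2), I set $A=H^{*}$ and $B=H^{\top}$, so that $A^{*}\otimes B^{*} = H\otimes\overline{H}$ and $\overline{B}=H^{*}$; the proposition then returns $\Phi(HMH^{*})$. Commutativity $HM=MH$ combined with $HH^{*}=nI$ collapses this to $\Phi(nM) = n\Phi(M)$, giving self-duality with scalar $\lambda = 1$.

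For Part (3), the analogous choice is $A=H^{*}$ and $B=\overline{H}$, whence $A^{*}\otimes B^{*} = H\otimes H^{\top}$ and $\overline{B}=H$; the proposition gives $(H\otimes H^{\top})\Phi(M) = \Phi(HMH)$, so the task reduces to verifying the matrix identity $HMH = n\overline{M}$. I would start from amicability $HM^{*} = MH^{*}$, rewrite $M^{*}=\overline{M}$ using the symmetry $M = M^{\top}$, and multiply on the right by $H$, combined with $H^{*}H=nI$, to extract an identity of the shape $H\overline{M}H = nM$. The main obstacle is then the final bookkeeping step that recasts this as the required $HMH = n\overline{M}$: this requires carefully combining complex conjugation with the transpose-version of amicability obtained by transposing $HM^{*}=MH^{*}$ (which yields $\overline{M}H^{\top} = \overline{H}M$). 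In the classical real amicable Hadamard setting, which is the standard context for the terminology, this last step is immediate because $\overline{H}=H$ and $\overline{M}=M$, so amicability and symmetry already give $HM=MH^{\top}$ and hence $HMH = MH^{\top}H = nM = n\overline{M}$ directly.
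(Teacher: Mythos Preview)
Your approach is the same as the paper's: all three parts are obtained by specialising Proposition~\ref{prop-sd-fbh} with appropriate choices of $A,B$ and then simplifying the triple product using $HH^{*}=nI$. Your computations for Parts~(1) and~(2) are correct and in fact match the paper's line for line; the only cosmetic difference is that in Part~(2) you use $HM=MH$ directly to collapse $HMH^{*}$, whereas the paper first invokes normality of Hadamard matrices to pass from $HM=MH$ to $H^{*}M=MH^{*}$ and works with $H^{*}MH$ instead. Both routes are equivalent.

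For Part~(3) your caution is warranted. With your (correct) choice $A=H^{*}$, $B=\overline{H}$ the proposition gives $(H\otimes H^{\top})\Phi(M)=\Phi(HMH)$, and amicability together with $M=M^{\top}$ yields only $H\overline{M}H=nM$, not $HMH=n\overline{M}$, unless $H$ and $M$ are real. The paper's argument takes the companion route: from amicability it derives $H^{*}MH^{*}=nM^{*}=n\overline{M}$, and then applies the proposition with $A=H$, $B=H^{\top}$; but the proposition with that choice literally returns $(H^{*}\otimes\overline{H})\Phi(M)=\Phi(H^{*}MH^{*})$, not $(H\otimes H^{\top})\Phi(M)$. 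So the paper's proof, read literally, establishes conjugate self-duality for $H^{*}\otimes\overline{H}$ rather than $H\otimes H^{\top}$, and the passage between the two requires exactly the extra step you isolate. In the standard real setting for amicable Hadamard matrices (where $\overline{H}=H$, $\overline{M}=M$) both versions coincide and your argument is complete; in the genuinely complex setting the gap you identify is real, and the paper does not close it either.
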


\begin{proof}
\begin{enumerate} 
\item Set $A = B = H^{\ast}$ and $M = H$ in Proposition \ref{prop-sd-fbh}. Then:
\[ \frac{1}{n}\Big(H\otimes H\Big)^{\ast} \Phi(H)=\frac{1}{n}\Phi(H^* H \overline{H})=\Phi(\overline{H})\,.\]
\item Since $H$ and $M$ are normal matrices, $H^{\ast}$ commutes with $M$, and thus 
\begin{align*}
M &= H^{\ast}M(H^{\ast})^{-1}\\
&= \frac{1}{n}H^{\ast}MH.
\end{align*}
Set $A = H, B = \overline{H}$ in Proposition \ref{prop-sd-fbh}, then $(H \otimes \overline{H})\Phi(M) = \Phi(M)$.
\item By hypothesis 
\begin{align*}
M^{\ast} &= H^{-1}MH^{\ast}\\
&= \frac{1}{n}H^{\ast}MH^{\ast}.
\end{align*}
Set $A = H, B = H^{\top}$ in Proposition \ref{prop-sd-fbh}, then $(H \otimes H^{\top})\Phi(M) = \Phi(M^{\ast})$. But $M$ is symmetric so $M^{\ast} = \overline{M}$ as required. \qedhere
\end{enumerate} 
\end{proof}

Since there exists $H \in \mathrm{BH}(6, 3)$, by Corollary \ref{tensors}, there exists matrices in $\mathrm{BH}(36, 3)$ with self-dual and conjugate self-dual vectors. 

\subsection{Many bent vectors from Bush-type matrices}

\emph{Bush-type} Hadamard matrices were introduced by K. A. Bush in 1971 to study finite projective planes, but have since found many other applications, \cite{bush-71}. Recently complex analogues of Bush-type Hadamard matrices have been studied, \cite{htcv}. 

\begin{definition} [Bush, \cite{bush-71}]
A matrix $H \in \BH(n^2,k)$ is of \emph{Bush-type} if it may be subdivided into $n \times n$ blocks $H_{ij}$ such that $JH_{ij} = H_{ij}J = \delta_{i,j}nJ$, for all $1 \leq i,j \leq n$, where $J$ is the $n \times n$ matrix of all ones.
\end{definition}

For $k=2$, this reduces to the definition of a (real) Bush-type Hadamard matrix. A key property of Bush-type matrices is that the diagonal block matrices $H_{ii}$ can be multiplied by arbitrary $u \in \langle \zeta_{k}\rangle$ while preserving the orthogonality of rows and columns. 

\begin{proposition}
    If $H\in \BH(4m^2,4)$ is of Bush-type, then there are at least $2^{2m}$ self-dual $H$-bent vectors, and at least $2^{2m}$ conjugate self-dual $(-H)$-bent vectors.
\end{proposition}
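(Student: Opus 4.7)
The plan is to produce the claimed bent vectors explicitly as block-constant $\pm 1$-valued vectors. Write $n = 4m^{2}$ so that $\sqrt{n} = 2m$, and for each sign pattern $\mathbf{s} = (s_{1}, \ldots, s_{2m}) \in \{-1, +1\}^{2m}$ define $\mathbf{x}_{\mathbf{s}}$ to be the vector of length $4m^{2}$ whose $i$-th block of $2m$ consecutive entries equals $s_{i} \mathbf{1}_{2m}$. Every entry lies in $\{\pm 1\} \subset \langle \zeta_{4} \rangle$, and distinct patterns yield distinct vectors, so this gives a family of $2^{2m}$ candidate bent vectors.

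The only real computation is the evaluation of $H\mathbf{x}_{\mathbf{s}}$. Specialising the Bush-type identity $H_{ij} J = \delta_{ij}(2m) J$ to a single column of $J$ yields $H_{ij}\mathbf{1}_{2m} = \delta_{ij}(2m)\mathbf{1}_{2m}$, so the $i$-th block of $H\mathbf{x}_{\mathbf{s}}$ reduces to
\[
\sum_{j=1}^{2m} s_{j} H_{ij}\mathbf{1}_{2m} = s_{i}(2m)\mathbf{1}_{2m},
\]
and hence $H\mathbf{x}_{\mathbf{s}} = 2m\,\mathbf{x}_{\mathbf{s}} = \sqrt{n}\,\mathbf{x}_{\mathbf{s}}$. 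By Definition \ref{def:bent} this says $\mathbf{x}_{\mathbf{s}}$ is self-dual $H$-bent with $\lambda = 1$, which already settles the first half of the statement.

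For the second half I would observe that $\overline{\mathbf{x}_{\mathbf{s}}} = \mathbf{x}_{\mathbf{s}}$ (the entries are real), so that
\[
(-H)\mathbf{x}_{\mathbf{s}} = -\sqrt{n}\,\mathbf{x}_{\mathbf{s}} = \sqrt{n}\,(-1)\,\overline{\mathbf{x}_{\mathbf{s}}}.
\]
Since $-1 \in \langle \zeta_{4}\rangle$, the matrix $-H$ is still in $\BH(4m^{2},4)$ and Definition \ref{def:bent} applies to it. Therefore the same family of $2^{2m}$ vectors furnishes the lower bound for conjugate self-dual $(-H)$-bent vectors, this time with $\lambda = -1$.

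I do not anticipate any substantive obstacle: the whole argument rests on the block row-sum identity $H_{ij}\mathbf{1}_{2m} = \delta_{ij}(2m)\mathbf{1}_{2m}$, which is immediate from the Bush-type hypothesis. The only small design point worth flagging in the write-up is that restricting to $s_{i} \in \{\pm 1\}$, rather than all of $\langle \zeta_{4}\rangle$, is exactly what makes $\overline{\mathbf{x}_{\mathbf{s}}} = \mathbf{x}_{\mathbf{s}}$ and therefore allows one family of $2^{2m}$ vectors to witness both lower bounds simultaneously.
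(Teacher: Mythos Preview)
Your proof is correct and essentially matches the paper's argument: both construct block-constant vectors $\mathbf{x}^{\top} = (u_1\mathbf{1}, \ldots, u_{2m}\mathbf{1})$ and invoke the Bush-type identity $H_{ij}\mathbf{1} = \delta_{ij}(2m)\mathbf{1}$ to obtain $H\mathbf{x} = 2m\,\mathbf{x}$. The only cosmetic difference is that the paper chooses $u_k \in \{\pm \zeta_4\} = \{\pm i\}$ (so $\overline{\mathbf{x}} = -\mathbf{x}$ and the conjugate self-dual part comes out with $\lambda = 1$), whereas you take $u_k \in \{\pm 1\}$ (so $\overline{\mathbf{x}} = \mathbf{x}$ and $\lambda = -1$); either choice witnesses both lower bounds simultaneously.
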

\begin{proof}
Let ${\bf 1}$ be the all-one vector of length $2m$, and let $u_k\in\{\pm \zeta_{4}\}$ be arbitrary. Any vector ${\bf x}^{\top}=(u_1 {\bf 1},\ldots, u_{2m}{\bf 1}),$ is both self-dual $H$-bent and conjugate self-dual $(-H)$-bent. This is a straightforward consequence of $H$ being of Bush type.
\end{proof}

A corresponding result holds when $k$ is odd. 

\begin{proposition}\label{bush-oddk}
Let $k$ be odd and let $H$ be a Bush-type $\BH(n^2,k)$, then exist $k^{n}$ matrices in $\BH(n^2,k)$, each admitting a self-dual and a conjugate self-dual bent vector.
\end{proposition}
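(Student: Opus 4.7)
The plan is to mirror the $k = 4$ construction from the preceding proposition, exploiting the fact that $v \mapsto v^2$ is a bijection on $\langle \zeta_k \rangle$ when $k$ is odd.

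First, I identify the $k^n$ matrices. Since $H \in \BH(n^2, k)$ is Bush-type, every diagonal block $H_{ii}$ has all row and column sums equal to $n$ with entries of modulus $1$ drawn from $\langle \zeta_k \rangle$; the strict triangle inequality therefore forces $H_{ii} = J_n$ for each $i$. For any $u = (u_1, \ldots, u_n) \in \langle \zeta_k \rangle^n$, I define $H_{(u)}$ by replacing each diagonal block $H_{ii} = J_n$ with $u_i J_n$. That $H_{(u)} \in \BH(n^2, k)$ follows from a direct block inner-product calculation: for $i_1 \ne i_2$, the modifications contribute $(u_{i_1} - 1) J H_{i_2 i_1}^{*} + (\overline{u_{i_2}} - 1) H_{i_1 i_2} J$, and both summands vanish since off-diagonal blocks have zero row and column sums; for $i_1 = i_2$ the diagonal identity is preserved because $|u_i|^2 = 1$. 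Different choices of $u$ give pairwise distinct matrices, so the construction yields exactly $k^n$ matrices.

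Next I construct the bent vectors. For any $v \in \langle \zeta_k \rangle^n$ let $\mathbf{x}_{(v)} = (v_1 \mathbf{1}_n, \ldots, v_n \mathbf{1}_n)^{\top}$ be the corresponding block-constant vector. Using $H_{ij} \mathbf{1}_n = \mathbf{0}$ for $i \ne j$ and $J_n \mathbf{1}_n = n \mathbf{1}_n$ gives
\[
H_{(u)} \mathbf{x}_{(v)} = n\,(u_1 v_1 \mathbf{1}_n, \ldots, u_n v_n \mathbf{1}_n)^{\top},
\]
so $\mathbf{x}_{(v)}$ is $H_{(u)}$-bent for every choice of $u,v$. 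The conjugate self-dual condition $H_{(u)} \mathbf{x}_{(v)} = n \mu \overline{\mathbf{x}_{(v)}}$ reduces to $u_i v_i^2 = \mu$ for all $i$; for any fixed $\mu \in \langle \zeta_k \rangle$, since $k$ is odd, I take $v_i$ to be the unique $\langle \zeta_k \rangle$-square root of $\mu u_i^{-1}$, producing a conjugate self-dual bent vector for every one of the $k^n$ matrices. The self-dual condition $H_{(u)} \mathbf{x}_{(v)} = n \lambda \mathbf{x}_{(v)}$ becomes $u_i = \lambda$ for all $i$, so any block-constant $\mathbf{x}_{(v)}$ is self-dual bent whenever $u$ is constant.

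The main obstacle is producing a self-dual bent vector for $H_{(u)}$ when $u$ is non-constant, since block-constant vectors no longer suffice. My preferred approach is to exploit the orthogonal decomposition $\mathbb{C}^{n^2} = V_1 \oplus V_2$, where $V_1$ is the $n$-dimensional span of block-constant vectors and $V_2$ consists of block vectors whose blocks each sum to zero. The action of $H_{(u)}$ on $V_2$ coincides with that of $H$ itself, independently of $u$, because the modified diagonal blocks $u_i J_n$ annihilate any zero-sum vector. Consequently, any $\langle \zeta_k \rangle$-valued eigenvector of $H|_{V_2}$ is simultaneously a self-dual bent vector for every $H_{(u)}$. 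Exhibiting such an eigenvector using the specific block structure of $H$ together with the odd-$k$ hypothesis is the key remaining step and is where I expect the technical difficulty to lie.
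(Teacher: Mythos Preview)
Your construction of the $k^{n}$ matrices $H_{(u)}$ and your treatment of the conjugate self-dual case coincide exactly with the paper's proof: taking $\mu = 1$ and the unique $k^{\textrm{th}}$ root of $u_{i}^{-1}$ in $\langle\zeta_{k}\rangle$ gives $v_{i} = \zeta_{k}^{u_{i}(k-1)/2}$, which is precisely the paper's choice $\alpha = \frac{k-1}{2}$.

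The divergence is in the self-dual case. The paper does \emph{not} pass to the subspace $V_{2}$; it stays with block-constant vectors and simply asserts that ${\bf x}^{\top} = (\zeta_{k}^{u_{1}\alpha}{\bf 1},\ldots,\zeta_{k}^{u_{n}\alpha}{\bf 1})$ with $\alpha = \frac{k+1}{2}$ is self-dual $H'$-bent. But your own computation already shows this cannot hold for non-constant $u$: the $i^{\textrm{th}}$ block of $H_{(u)}{\bf x}_{(v)}$ is $n\,\zeta_{k}^{u_{i}}v_{i}{\bf 1}$, so self-duality forces $\zeta_{k}^{u_{i}} = \lambda$ for every $i$. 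In other words, you have correctly located a genuine gap in the paper's argument, not overlooked a trick.

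Your proposed repair, however, is also incomplete, and likely unworkable in this generality. The restriction of $H_{(u)}$ to $V_{2}$ does agree with the restriction of $H$, as you say, but nothing in the hypotheses guarantees that $H|_{V_{2}}$ has a $\langle\zeta_{k}\rangle$-valued eigenvector: the Bush-type condition controls only the action on $V_{1}$ and says nothing about eigenvectors in the complement. So the ``key remaining step'' you flag is not merely technical; without additional hypotheses on $H$ there is no reason to expect such an eigenvector to exist. As it stands, both the paper's proof and your proposal establish the conjugate self-dual claim for all $k^{n}$ matrices, but the self-dual claim only for the $k$ matrices with constant $u$.
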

\begin{proof}
Let $H=[H_{ij}]$ and 
\[ H'=[H'_{ij}]=\left\{\begin{array}{cr} H_{ij} & i\ne j\\ \zeta_{k}^{u_{i}}H_{ij} & i=j, \end{array}\right.\]
where $u_i\in \Z_{k}$ for each $i$. Then $H'$ is a $\BH(n^2,k)$ and the vector ${\bf x}$ defined by  ${\bf x}^{\top}=(\zeta_{k}^{u_{1}\alpha} {\bf 1},\ldots,\zeta_{k}^{u_{n}\alpha} {\bf 1})$  is self-dual $H'$-bent when $\alpha = \frac{k+1}{2}$, and is conjugate self-dual $H'$-bent when $\alpha = \frac{k-1}{2}$.
\end{proof}

For an odd prime power $q$, there exists $H \in \BH(2q,q)$ as a Corollary to Theorem 2.4 of Jungnickel, \cite{jung}.  It follows from Theorem 5 of \cite{htcv} that there is a Bush-type matrix in $\BH(4q^2,q)$ admitting many self-dual bent vectors.

Given that many distinct self-dual bent vectors for a Bush-type Hadamard matrix, one may ask whether there exists an orthonormal basis of bent vectors; in fact such objects have been previously studied as \emph{unbiased Butson Hadamard matrices}.

\begin{definition} Two $\BH(n,k)$ matrices $H$ and $K$ are called \emph{unbiased} if $HK^*=zL$ for some $\BH(n,k)$ matrix $L$ where $zz^{\ast} = n$.
\end{definition}

It follows from the definition that for a pair of unbiased $\BH(n^2,k)$ matrices $H$ and $K$, each column of $K$ is a bent vector for $H$, and vice versa.

\subsection{Conjugate self-bent Butson matrices}

We conclude this section on explicit constructions of conjugate self-dual bent vectors by studying the matrix equation $MN = p\overline{N}$ in dimension $p^2$ for an odd prime $p$. We demonstrate a construction of infinitely many pairs of matrices $M$ and $N$ with this property, achieved by building Bush-type Butson matrices in $\textrm{BH}(p^{2}, p)$ from the Fourier matrices $F(C_{p})$ for any odd prime $p$. Moreover, we consider the special case where $M^{2} = p\overline{M}$, i.e., matrices $M$ with columns comprised entirely of conjugate self-dual $M$-bent vectors. We term such matrices \textit{conjugate self-bent} and produce examples in $\BH(9,3)$. 


 We begin with a lemma on a set of orthogonal projection matrices constructed from the rows of $F(C_{p})$. 

\begin{lemma}\label{b2}
Denote by $r_{a}$ the $a^{\textrm{th}}$ row of $F(C_{p})$ for $0 \leq a \leq p-1$, and let $R_{a} = r_{a}^{\ast} r_{a}$ be the outer product. 
The following hold: 
\begin{enumerate}
\item Rank 1 projectors: $R_{a}$ is rank $1$, and satisfies $R_{a}^{2} = pR_{a}$ for $0 \leq a \leq p-1$. 
\item Hermitian: $R_a^*=R_a$, for each $a$. Furthermore, $R_{a}^{\top} = R_{p-a}$. 
\item Orthogonal: $R_aR_b=0$, $a\neq b$. 
\item Basis: $\sum_{0}^{p-1}R_a^2=p^{2}I_p$. 
\end{enumerate}
\end{lemma}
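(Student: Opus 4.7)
The plan is to treat all four claims as direct computations with the outer products $R_a = r_a^{\ast} r_a$, exploiting the basic fact that $r_a r_b^{\ast} = \sum_{i=0}^{p-1}\zeta_p^{(a-b)i}$, which equals $p$ when $a=b$ and $0$ otherwise by orthogonality of rows of $F(C_p)$.

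First, I would observe that each $R_a$ is rank $1$ because it is a nonzero outer product of a column vector with a row vector. For the identity $R_a^2 = pR_a$, use associativity to write
\[ R_a^2 = r_a^{\ast}(r_a r_a^{\ast}) r_a = p\, r_a^{\ast} r_a = p R_a, \]
since the scalar $r_a r_a^{\ast}$ is exactly $p$. This settles part (1).

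For part (2), Hermiticity follows immediately from $(r_a^{\ast} r_a)^{\ast} = r_a^{\ast}(r_a^{\ast})^{\ast} = r_a^{\ast} r_a$. For the transpose identity, read off the entries: the $(i,j)$-entry of $R_a$ is $\overline{\zeta_p^{ai}}\,\zeta_p^{aj} = \zeta_p^{a(j-i)}$, so the $(i,j)$-entry of $R_a^{\top}$ is $\zeta_p^{a(i-j)} = \zeta_p^{(p-a)(j-i)}$, which is the $(i,j)$-entry of $R_{p-a}$. For part (3), apply the same associativity trick:
\[ R_a R_b = r_a^{\ast}(r_a r_b^{\ast}) r_b = 0 \quad \text{when } a \not\equiv b \pmod{p}, \]
since $r_a r_b^{\ast} = 0$ by the orthogonality of distinct rows of $F(C_p)$.

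For part (4), combine (1) with the trivial identity $\sum_a R_a = F(C_p)^{\ast} F(C_p) = pI_p$ (the latter holding because $F(C_p)$ is a square Hadamard-type matrix, so $F(C_p)^{\ast} F(C_p) = F(C_p) F(C_p)^{\ast} = pI_p$). Then
\[ \sum_{a=0}^{p-1} R_a^2 = p \sum_{a=0}^{p-1} R_a = p \cdot pI_p = p^2 I_p. \]
There is no real obstacle here; all four parts reduce to the single computation $r_a r_b^{\ast} = p\delta_{a,b}$, and the only point that requires a small amount of care is the entry-level identification in the transpose formula in part (2).
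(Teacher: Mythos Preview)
Your proof is correct; each step is the natural direct computation and the identity $r_a r_b^{\ast} = p\delta_{a,b}$ is exactly the right engine. The paper does not supply a proof of this lemma at all (it is stated without proof, the computations being routine), so there is nothing to compare against beyond noting that your argument is precisely the straightforward verification one would expect.
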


Next we construct a set of matrices in $\textrm{BH}(p^{2}, p)$ with the $R_{a}$ as blocks. 

\begin{proposition} \label{prop-conj-dual-vectors-bush}
Let $B_{a}$ be the block-circulant matrix having $[ R_{0}, R_{a}, R_{2a}, \ldots, R_{(p-1)a}]$ as its first row, with indices interpreted modulo $p$. For $a=1,2,\ldots,p-1$, the matrix $B_a$ is a symmetric Bush-type Butson Hadamard matrix $\BH(p^2,p)$. 
Furthermore, $\overline{B_a}=B_{p-a}$ and $B_aB_{(p-2)a}=pB_{2a}=p\overline{B}_{(p-2)a}$.
\end{proposition}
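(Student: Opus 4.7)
The plan is to read off the block structure of $B_a$ as $(B_a)_{ij} = R_{(j-i)a \bmod p}$, and then obtain every claim by block-wise computation using the four properties of the $R_c$ recorded in Lemma~\ref{b2}. Throughout, all indices on $R$ are understood modulo $p$, and $a \in \{1, \ldots, p-1\}$ is a unit mod $p$, so multiplication by $a$ (or by $(p-2)a$, provided $p \neq 2$) permutes residues.

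First I would verify that $B_a \in \BH(p^2, p)$. The entries of $r_b$ lie in $\langle \zeta_p\rangle$, so the entry $(k,l)$ of $R_b = r_b^* r_b$ equals $\zeta_p^{(l-k)b}$, putting all block entries in $\langle \zeta_p\rangle$. From this explicit formula one reads off $\overline{R_b} = R_{-b}$ and $R_b^{\top} = R_{-b}$, which immediately give $\overline{B_a} = B_{p-a}$ (entrywise conjugation negates every subscript), and the symmetry of $B_a$ (transposing sends block $(i,j)$ to the transpose of block $(j,i)$, i.e.\ $R_{(i-j)a}^{\top} = R_{(j-i)a}$).

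Next I would check the Bush-type property. Since $r_0 = (1,\ldots,1)$, we have $J = R_0$. For the $(i,j)$ block, $JC_{ij} = R_0 R_{(j-i)a}$: if $i = j$, this is $R_0^2 = pR_0 = pJ$, and if $i \neq j$ it is $0$ by orthogonality (item 3 of Lemma~\ref{b2}), since $(j-i)a \not\equiv 0 \bmod p$. The same argument on the right gives $C_{ij}J = \delta_{ij}pJ$. For the Hadamard condition, use symmetry to reduce to $B_a \overline{B_a} = p^2 I_{p^2}$. Block $(i,k)$ is $\sum_{j} R_{(j-i)a}R_{(j-k)a}$ (the second factor becomes $R_{(j-k)a}$ after applying $\overline{R_b} = R_{-b}$ to $C_{jk}$). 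Orthogonality forces $i = k$, in which case each term is $R_{(j-i)a}^2 = pR_{(j-i)a}$, and $j \mapsto (j-i)a$ ranges over all residues, giving $p\sum_{c}R_c$. The identity $\sum_{c}R_c = pI_p$ is an immediate consequence of item 4 together with $R_c^2 = pR_c$ (equivalently, it is $F(C_p)^*F(C_p) = pI_p$). So the diagonal block is $p^2 I_p$ and we are done.

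Finally, for the product identity, compute the $(i,k)$ block of $B_a B_{(p-2)a}$ as
\[
\sum_{j} R_{(j-i)a} R_{(k-j)(p-2)a} = \sum_{j} R_{(j-i)a} R_{2(j-k)a},
\]
where I used $(p-2)a \equiv -2a$. Orthogonality (item 3) forces $(j-i)a \equiv 2(j-k)a \bmod p$, hence $j \equiv 2k - i$, leaving a unique nonzero term equal to $R_{2(k-i)a}^2 = pR_{2(k-i)a}$, which is precisely the $(i,k)$ block of $pB_{2a}$. The second equality $pB_{2a} = p\overline{B_{(p-2)a}}$ is then a special case of $\overline{B_c} = B_{p-c}$ applied with $c = (p-2)a$, since $p-(p-2)a \equiv 2a \bmod p$. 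The only real bookkeeping obstacle is the modular arithmetic on block indices; once one commits to the convention $C_{ij} = R_{(j-i)a}$, everything reduces to one-line applications of the projector identities in Lemma~\ref{b2}.
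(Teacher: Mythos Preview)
Your proof is correct and follows essentially the same route as the paper: adopt the block convention $(B_a)_{ij}=R_{(j-i)a}$ and verify each assertion block-wise using the projector identities of Lemma~\ref{b2}. Your derivations of the Hadamard property (via $B_a\overline{B_a}$ and $\sum_c R_c = pI_p$) and of $B_aB_{(p-2)a}=pB_{2a}$ match the paper's computations line for line, up to relabelling of summation and block indices.
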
 
\begin{proof} 
Consider the inner product of two rows $i$ and $j$ of blocks from $B_{a}$: 
\[ \sum_{k=0}^{p-1} R_{(p-i+k)a}R_{(p-j-k)a}^{\ast} = \delta_{i,j} p^{2} I_{p}\,,\]
which follows from Lemma \ref{b2} since every product consists of distinct blocks (and so is zero) if $i \neq j$; otherwise the sum evaluates to a scalar multiple of the identity matrix. Hence $B_{a}$ is a Bush-type Butson Hadamard matrix. 

The matrix is symmetric (but not Hermitian) since the $(i,j)$ entry of $B_{a}$ is $R_{(p-i+j)a}$ while the $(i,j)$ entry of the transpose is $R_{(p-j+i)a}^{\top} = R_{(p-i+j)a}$ by Lemma \ref{b2}. The proof that $\overline{B_{a}} = B_{p-a}$ is similar. With reference to Lemma \ref{b2}, the product of the $i^{\rm th}$ row of blocks from $B_{a}$ with the $j^{\rm th}$ column of blocks from $B_{(p-2)a}$ is
\[
\sum_{k=0}^{p-1} R_{(k-i)a}R_{(2k-2j)a} = R_{(2j-2i)a}^{2} = pR_{(2j-2i)a}. 
\]
It follows that $B_{a}B_{(p-2)a} = pB_{2a}$, as required.
\end{proof}

As a consequence of Proposition \ref{prop-conj-dual-vectors-bush}, for any prime $p \geq 3$ and any choice of $a \in \{1,\ldots,p-1\}$ the columns of $B_{(p-2)a}$ are conjugate self-dual $B_{a}$-bent vectors. Further, in the special case where $p=3$, it follows that $B_{a}$ is conjugate self-bent.

\begin{example}\label{third-root}
For $p=3$ and $\zeta = \zeta_{3}$, let
\[
R_0=\begin{bmatrix} 1 & 1 & 1\\1 & 1 &1\\1 & 1 &1 \end{bmatrix},~ R_1=\begin{bmatrix} 1 & \zeta & \zeta^2\\\zeta^2  & 1 &\zeta\\\zeta & \zeta^2 &1 \end{bmatrix},~ R_2=\begin{bmatrix} 1 & \zeta^2 & \zeta\\\zeta  & 1 &\zeta^2\\\zeta^2 & \zeta &1 \end{bmatrix},~ \text{and}~B_1=\begin{bmatrix} R_0 & R_1 & R_2\\R_2 & R_0 & R_1\\ R_1 & R_2 & R_0 \end{bmatrix}. 
\]
Taking $M=B_1$, then $M^{2}=3\overline{M}$.
\end{example}

\section{On the covering radius of Butson codes}\label{codes}

A code over $\Z_k$ (or $\Z_k$-code) of length $n$ is a nonempty subset $C$ of $\Z_k^n$. The elements of $C$ are called \textit{codewords}. The Hamming weight of a vector ${\bf v}\in \Z_k^n$, denoted by $\wt({\bf v})$, is the number of nonzero coordinates of ${\bf v}$. The Hamming distance between two vectors ${\bf v}, {\bf w}\in \Z_k^n$, denoted by $d({\bf v},{\bf w})=\wt({\bf v-w})$, is the number of coordinates in which they differ. The minimum Hamming distance of a code $C$ is given by $d = \min_{{\bf x},{\bf y} \in C, {\bf x}\neq{\bf y}} d({\bf x},{\bf y})$. When this parameter is known we say $C$ is a $(n,|C|,d)$ code. The {\em covering radius} of a $\Z_k$-code $C$ of length $n$ is defined by the formula
\[r(C)=\max_{{\bf x}\in \Z_k^n} \min_{{\bf y}\in C} d({\bf x},{\bf y}).\]

Let $V$ be a free module of dimension $m$ over $\mathbb{Z}_{q}$ and denote by $\mathrm{Fun}(V)$ the space of functions $f: V \rightarrow \mathbb{Z}_{q}$, which is of dimension $q^{m}$ over $\mathbb{Z}_{q}$. The first order generalised Reed-Muller code $R_{q}(1, m)$ is a submodule of $\mathrm{Fun}(V)$ generated by the linear functions $\mathbb{Z}_{q}^{m} \rightarrow \mathbb{Z}_{q}$, which is easily seen to be a $(q^{m}, q^{m+1}, q^{m-1})$-code. 

The {\em nonlinearity} of a function $f\colon {\mathbb{Z}}_{q}^m \rightarrow {\mathbb{Z}}_{q}$ is the Hamming distance between $f$ and $R_{1}(q, m)$. Thus the maximal non-linearity of a function $f:\mathbb{Z}_{q}^{m} \rightarrow \mathbb{Z}_{q}$ is equal to the covering radius of $R_{q}(1, m)$, this quantity is denoted $\rho_{q}(m)$. Schmidt showed that when $m$ is even and $q$ is prime, then $\rho_q(m)=q^{m-1}(q-1)-q^{m/2-1}$, \cite{Sch20}.

\begin{definition}
Let $H\in \BH(n,k)$. We denote by $R_H$ the $\Z_k$-code of length $n$ consisting of the rows of $L(H)$, and we denote by $C_H$ the  $\Z_k$-code defined as
$C_H=\cup_{\alpha \in \Z_{k}}(R_H+\alpha {\bf 1})$. The code $C_H$ over $\Z_k$  is called a \textit{Butson Hadamard code} (briefly, BH-code).
\end{definition} 

\begin{example} Let $H\in \BH(4,8)$ be as in Example \ref{formaLog}. Then $R_{H}$ consists of the following four vectors: $\{[0,  0, 0,  0],[0, 2,  4, 6],[0, 4, 0, 4], [0, 6, 4, 2]\}$, and $C_{H}$ is the following set of $32$ codewords: 
\[ C_{H}=\left\{ [i,  i, i,  i], [i, 2+i,  4+i, 6+i], [i, 4+i, i, 4+i], [i, 6+i, 4+i, 2+i] \mid i \in \mathbb{Z}_{8} \right\}\] 
Since the rows of $H$ are orthogonal, no two distinct rows can share more than two entries; it follows that this is a $(4, 32, 2)$-code.
\end{example}

We study upper and lower bounds on the covering radius of $C_{H}$ for an arbitrary $H \in \BH(n, q)$, and under the hypothesis that $H$ admits a bent vector. An upper bound was obtained by Leducq from a relatively weak combinatorial condition.

\begin{theorem}[cf \cite{Leducq}]\label{Leducq6}
Let $H\in \BH(n, q)$ where $q$ is an odd prime. 
Then 
\[ r(C_{H}) \leq \frac{q-1}{q}n - \frac{1}{q}\sqrt{n}\,.\]
\end{theorem}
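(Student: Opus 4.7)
The plan is to translate the covering-radius problem into a cyclotomic character-sum estimate, then read off the bound from the orthogonality relation $HH^{\ast} = nI_n$. Fix an arbitrary ${\bf x} \in \Z_q^n$ and associate to it the vector ${\bf v} \in \mathbb{C}^n$ with entries $v_j = \zeta_q^{x_j}$, each of modulus $1$. Write ${\bf h}_i$ for the $i^{\textrm{th}}$ row of $H$, so its entries are $\zeta_q^{L_{i,j}}$, and for each $i$ and each $\alpha \in \Z_q$ set
\[ N_{i,\alpha} = \#\{ j : x_j - L_{i,j} \equiv \alpha \pmod{q} \}. \]
Two easy observations organise the rest of the argument. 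First, the Hamming distance from ${\bf x}$ to the codeword $L_i + \alpha {\bf 1}$ is $n - N_{i,\alpha}$, so $d({\bf x}, C_H) = n - \max_{i,\alpha} N_{i,\alpha}$. Second, expanding the inner product entry by entry gives the identity
\[ {\bf h}_i^{\ast} {\bf v} = \sum_{\alpha \in \Z_q} N_{i,\alpha}\, \zeta_q^\alpha. \]

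Next I would exploit the row orthogonality to locate a row whose inner product against ${\bf v}$ is large. Since $\sum_i |{\bf h}_i^{\ast} {\bf v}|^2 = \|H^{\ast}{\bf v}\|^2 = n\|{\bf v}\|^2 = n^2$, some index $i$ satisfies $|{\bf h}_i^{\ast} {\bf v}| \geq \sqrt{n}$. Fix such an $i$ and set $M = \max_\alpha N_{i,\alpha}$. Using $\sum_{\alpha \in \Z_q} \zeta_q^\alpha = 0$, I rewrite
\[ {\bf h}_i^{\ast} {\bf v} = \sum_{\alpha \in \Z_q} (N_{i,\alpha} - M)\, \zeta_q^\alpha, \]
to which the triangle inequality applies cleanly: the coefficients $M - N_{i,\alpha}$ are all nonnegative with total $qM - n$, so $|{\bf h}_i^{\ast} {\bf v}| \leq qM - n$. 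Combined with the lower bound $\sqrt{n} \leq |{\bf h}_i^{\ast} {\bf v}|$, this gives $M \geq (n + \sqrt{n})/q$, whence $d({\bf x}, C_H) \leq n - M \leq \tfrac{q-1}{q}n - \tfrac{1}{q}\sqrt{n}$. As ${\bf x}$ was arbitrary, this is the desired upper bound on $r(C_H)$.

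No step above presents a serious obstacle. The only tactical manoeuvre worth flagging is the normalisation $N_{i,\alpha} \mapsto N_{i,\alpha} - M$ inside the character sum: this is what converts a lower bound on the modulus of a cyclotomic sum into a lower bound on its largest coefficient, which is exactly what the covering-radius estimate requires. I would also remark in passing that neither the primality nor the oddness of $q$ enters the argument, so the same inequality should remain valid for every $q \geq 2$.
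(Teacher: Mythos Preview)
Your argument is correct and complete. The paper, however, does not prove the inequality directly: it simply checks that $C_H$ is self-complementary (immediate from the construction) and is an orthogonal array of strength~$2$ (this uses that for \emph{prime} $q$ the only integer relation among $1,\zeta_q,\ldots,\zeta_q^{q-1}$ is the full sum, so column orthogonality forces each difference $L_{i,c}-L_{i,c'}$ to be equidistributed over the rows), and then invokes Leducq's Theorem~6 as a black box.

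Your route is genuinely different and in some ways more informative. By using the unitarity of $n^{-1/2}H$ to locate a row with $|{\bf h}_i^{\ast}{\bf v}|\ge\sqrt{n}$, and then the subtraction-of-maximum trick inside the character sum, you bound $\max_\alpha N_{i,\alpha}$ directly without ever passing through the strength-$2$ property. This is precisely why, as you note, primality of $q$ disappears from your proof, whereas in the paper's argument it is essential for the strength-$2$ verification. One cosmetic point: with ${\bf h}_i$ the $i^{\mathrm{th}}$ \emph{row}, the quantity $\sum_i|{\bf h}_i^{\ast}{\bf v}|^2$ is $\|\overline{H}{\bf v}\|^2$ rather than $\|H^{\ast}{\bf v}\|^2$; both equal $n\|{\bf v}\|^2=n^2$, so nothing is affected.
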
 

\begin{proof} 
This follows from Leducq's Theorem 6, \cite{Leducq} once the conditions of that theorem are verified. We observe that a code is \text{self-complementary} if it is closed under addition of $\alpha \bf{1}$ for all $\alpha \in \mathbb{Z}_{q}$ and this condition is clearly met by the construction of $C_{H}$.

A code is of strength $2$ if in any two coordinates of the code, one sees each pair $[x,y] \in \mathbb{Z}_{q}^{2}$ equally often when ranging over all codewords. Since $q$ is prime, the only vanishing sum of $q^{\textrm{th}}$ roots of unity is the full sum, and the result follows from orthogonality of columns of the Hadamard matrix $H$. 
\end{proof} 

It is well known in the literature that quadratic functions are often maximally non-linear: when $q = 2$ the classes of bent functions and maximally non-linear functions co-incide; as previously observed the construction of Kumar-Scholtz-Welch is via quadratic functions. In fact, Leducq uses precisely this construction to obtain the following result. 

\begin{theorem}[Theorem 2, \cite{Leducq}]
Let $H = F(C_{q}^{m})$. Then 
\[ \frac{q-1}{q} q^{m} - \frac{1}{q} q^{\lceil m/2 \rceil} \leq \rho(C_{H}) \leq \frac{q-1}{q} q^{m} - \frac{1}{q} q^{ m/2}.\]
\end{theorem}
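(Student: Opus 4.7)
The plan is to treat the two inequalities separately. The upper bound is immediate: with $n = q^m$ in Theorem \ref{Leducq6}, $\sqrt{n} = q^{m/2}$, giving $r(C_H) \leq \frac{q-1}{q}q^m - \frac{1}{q}q^{m/2}$. The hypotheses of Theorem \ref{Leducq6} hold since $C_H$ is self-complementary by construction and of strength $2$ by orthogonality of the columns of $F(C_q^m)$.

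For the lower bound, I would exhibit a function $f \colon \mathbb{Z}_q^m \to \mathbb{Z}_q$ whose associated vector $\mathbf{v} = (f(x))_{x \in \mathbb{Z}_q^m}$ lies at distance at least $\frac{q-1}{q}q^m - \frac{1}{q}q^{\lceil m/2 \rceil}$ from every codeword. Writing $N_{a,b} = \#\{x : f(x) = a \cdot x + b\}$ for the number of agreements with the affine codeword $\ell_{a,b}$, one has $d(\mathbf{v},\ell_{a,b}) = q^m - N_{a,b}$, and Fourier inversion on $\mathbb{Z}_q$ gives
\[
N_{a,b} = q^{m-1} + \frac{1}{q}\sum_{k=1}^{q-1} \zeta_q^{-kb}\, W_{kf}(ka),
\]
where $W_g(c) = \sum_x \zeta_q^{g(x) - c\cdot x}$. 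For $m$ even I would take $f$ to be the Kumar-Scholtz-Welch form from Proposition \ref{ex_sd-bent} (or a closely related diagonal quadratic form); by Proposition \ref{ex_sd-bent} each $kf$ is again bent, so $|W_{kf}(ka)| = q^{m/2}$, and factoring the Walsh sum over the independent blocks of the quadratic form yields an explicit expression $W_{kf}(ka) = q^{m/2}\zeta_q^{-k\psi(a)}$ for some $\psi$. Substituting into the Fourier sum collapses it to a geometric series in $k$, giving a closed formula for $N_{a,b}$ from which $\max_{a,b} N_{a,b}$ is read off.

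For $m$ odd I would apply the same analysis to a trivial extension $g(x_1,\ldots,x_m) = f(x_1,\ldots,x_{m-1})$ of the above construction on $m-1$ variables. The Walsh transform of $g$ vanishes unless $a_m = 0$, in which case it equals $q \cdot W_f(a_1,\ldots,a_{m-1})$; the extra factor of $q$ is precisely what replaces $m/2$ by $\lceil m/2 \rceil$ in the bound.

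The main obstacle is pinning down the constant $\frac{1}{q}$ in front of $q^{\lceil m/2 \rceil}$: the naive triangle inequality $|\sum_{k=1}^{q-1}\zeta_q^{-kb} W_{kf}(ka)| \leq (q-1)q^{m/2}$ yields only the weaker lower bound $\frac{q-1}{q}q^m - \frac{q-1}{q}q^{\lceil m/2 \rceil}$. To reach the sharp constant one must analyze the phases $\zeta_q^{-k(b + \psi(a))}$ carefully and choose the quadratic form so that these phases interfere destructively (rather than constructively) at the worst pair $(a,b)$; this cancellation depends on a sign analysis involving the Gauss sum $G(q)^m = \pm q^{m/2}$ (whose sign depends on $q \pmod 4$ and $m \pmod 4$) together with the Legendre symbol of the diagonal coefficients, and it is the delicate step on which the result hinges.
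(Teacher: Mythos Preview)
The paper does not give a proof of this theorem: it is quoted from Leducq with only the remark that ``Leducq uses precisely this construction'' (meaning Kumar--Scholtz--Welch). So there is no argument in the paper to compare against beyond that one-line hint and the preceding Theorem~\ref{Leducq6}.

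Your upper bound via Theorem~\ref{Leducq6} with $n=q^m$ is exactly right, and your trivial-extension trick for odd $m$ is also sound once the even case is in hand.

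For the lower bound in the even case, however, the obstacle you flag is not a technicality but a genuine gap, and the hyperbolic KSW form of Proposition~\ref{ex_sd-bent} does \emph{not} yield the sharp constant. Concretely, take $q=3$, $m=2$, $f(x_1,x_2)=x_1x_2$: there are five pairs with $x_1x_2=0$, so $N_{(0,0),0}=5$ and the distance from $\mathbf{v}$ to $C_H$ is at most $4$, strictly below the target $\tfrac{2}{3}\cdot 9-\tfrac{1}{3}\cdot 3=5$. The reason is that for the hyperbolic form one has $W_{kf}(0)=q^{m/2}$ real and positive for every $k$, so at $(a,b)=(0,0)$ the phases in your Fourier sum align constructively and the triangle inequality is attained with equality. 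What is actually needed is the diagonal form you mention parenthetically: for suitable $f(x)=\sum_i c_i x_i^2$ the Gauss-sum phases force the value distribution $\{N_{a,b}:b\in\mathbb{Z}_q\}$ to have maximum $q^{m-1}+q^{m/2-1}$ for every $a$, rather than $q^{m-1}+(q-1)q^{m/2-1}$. This is exactly the Legendre-symbol and Gauss-sum sign analysis you allude to in your final paragraph but do not carry out. As written, the proposal is an honest outline with the decisive step left open.
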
 

In particular, when $m$ is even, this bound is tight. But when $m$ is odd, bent functions are typically not the functions of maximal non-linearity. We consider a generalisation of this result to more general Buston matrices; first we restrict to $k = 3$. 

\begin{lemma}\label{lemma-dh}
 Let ${\bf v},{\bf w}\in \langle \zeta_{3}\rangle^n$. Then $d( L({\bf v}),L({\bf w}))=2/3 \Big(n-\mathcal{R}(\langle {\bf v},{\bf w}\rangle)\Big)$.
\end{lemma}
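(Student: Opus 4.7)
The plan is to partition the coordinates by the value of $v_i\overline{w_i}$, which lies in $\langle\zeta_3\rangle$, and then express both sides of the claimed identity in terms of these counts.

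Concretely, for each index $i$, let $a_i = L({\bf v})_i - L({\bf w})_i \bmod 3$, so that $v_i\overline{w_i} = \zeta_3^{a_i}$. Let $n_j = \#\{i : a_i = j\}$ for $j = 0,1,2$, so $n_0 + n_1 + n_2 = n$. The Hamming distance $d(L({\bf v}),L({\bf w}))$ counts the coordinates on which the two logarithmic vectors differ, which is precisely $n_1 + n_2 = n - n_0$.

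Next I would compute the right-hand side. By definition
\[
\langle {\bf v},{\bf w}\rangle = \sum_{i=1}^n v_i\overline{w_i} = n_0 + n_1\zeta_3 + n_2\zeta_3^2,
\]
and since $\mathcal{R}(\zeta_3) = \mathcal{R}(\zeta_3^2) = -1/2$, one obtains
\[
\mathcal{R}(\langle{\bf v},{\bf w}\rangle) = n_0 - \tfrac{1}{2}(n_1 + n_2) = n_0 - \tfrac{1}{2}(n - n_0) = \tfrac{3n_0 - n}{2}.
\]
Substituting gives $n - \mathcal{R}(\langle{\bf v},{\bf w}\rangle) = \tfrac{3}{2}(n - n_0)$, and multiplying by $2/3$ yields exactly $n - n_0 = d(L({\bf v}), L({\bf w}))$.

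There is essentially no obstacle here; the lemma is a direct computation from the fact that the three cube roots of unity sum to zero and that the two non-trivial ones share the real part $-1/2$. The only thing to be careful about is the convention for the Hermitian inner product (ensuring $v_i\overline{w_i} = \zeta_3^{L({\bf v})_i - L({\bf w})_i}$ rather than its conjugate), but since the real part is invariant under conjugation this does not affect the outcome.
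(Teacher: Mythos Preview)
Your proof is correct and follows essentially the same approach as the paper: both arguments partition the coordinates according to the value of $v_i\overline{w_i}\in\langle\zeta_3\rangle$ (the paper uses $s_0,s_1,s_2$ where you use $n_0,n_1,n_2$), compute the real part of the inner product using $\mathcal{R}(\zeta_3)=\mathcal{R}(\zeta_3^2)=-\tfrac12$, and rearrange. Your remark about the inner-product convention being irrelevant because the real part is conjugation-invariant is a nice touch that the paper omits.
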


\begin{proof}
Observe that
\[ \langle {\bf v},{\bf w}\rangle=s_0 +s_1\zeta_3+s_2\zeta_3^2=s_0-\frac{s_1+s_2}{2}+ i\, \frac{\sqrt{3} \,(s_1-s_2)}{2}\]
where $n=s_0+s_1+s_2$ and $d(L({\bf v}), L({\bf w}))=s_1+s_2$.
Then $n-\mathcal{R}(\langle {\bf v},{\bf w}\rangle)=3/2 \, \,d(L({\bf v}),L({\bf w}))$, and the result follows.
\end{proof}

\begin{theorem} 
Suppose that $H \in BH(n, 3)$ admits a bent vector. Then 
\[ \frac{2}{3}n - \frac{2}{3}\sqrt{n}\leq r(C_{H}) \leq \frac{2}{3}n - \frac{1}{3}\sqrt{n}.\]
\end{theorem}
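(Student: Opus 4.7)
The plan is to handle the two bounds separately. The upper bound $r(C_H) \le \tfrac{2}{3}n - \tfrac{1}{3}\sqrt{n}$ is immediate from Theorem \ref{Leducq6} specialised to $q=3$ and does not use the bent hypothesis at all. The substance therefore lies in the lower bound, which I will establish by exhibiting a single vector $\mathbf{v}\in\Z_3^n$ with $d(\mathbf{v},\mathbf{c}) \ge \tfrac{2}{3}(n-\sqrt{n})$ for every codeword $\mathbf{c}\in C_H$. By the definition $r(C_H) = \max_{\mathbf{v}}\min_{\mathbf{c}}d(\mathbf{v},\mathbf{c})$, a single such witness suffices.

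The candidate is $\mathbf{v} = L(\overline{\mathbf{x}})$, where $\mathbf{x}$ is the given $H$-bent vector. The codewords of $C_H$ are precisely the vectors $L(\zeta_3^{\alpha} r_i)$ as $i$ ranges over $\{1,\dots,n\}$ and $\alpha$ over $\Z_3$, with $r_i$ denoting the $i$-th row of $H$ regarded as a column vector. A direct expansion gives
\[
\langle \overline{\mathbf{x}},\, \zeta_3^{\alpha} r_i\rangle \;=\; \sum_{j=1}^{n}\overline{x_j}\,\overline{\zeta_3^{\alpha}h_{ij}} \;=\; \zeta_3^{-\alpha}\,\overline{(H\mathbf{x})_i} \;=\; \sqrt{n}\,\zeta_3^{-\alpha}\,\overline{y_i},
\]
where $y_i = (H\mathbf{x})_i/\sqrt{n}$ has modulus $1$ by the bent property. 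Applying Lemma \ref{lemma-dh} then yields
\[
d\bigl(L(\overline{\mathbf{x}}),\,L(\zeta_3^{\alpha}r_i)\bigr) \;=\; \tfrac{2}{3}\bigl(n - \sqrt{n}\,\mathcal{R}(\zeta_3^{-\alpha}\overline{y_i})\bigr) \;\ge\; \tfrac{2}{3}\bigl(n-\sqrt{n}\bigr),
\]
uniformly in $i$ and $\alpha$, since the real part of the unit complex number $\zeta_3^{-\alpha}\overline{y_i}$ is at most $1$. Taking the minimum over codewords and then the supremum over test vectors delivers the lower bound.

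The main point requiring attention is the choice of $\overline{\mathbf{x}}$ rather than $\mathbf{x}$ as the test vector. The bent hypothesis controls $H\mathbf{x}$, but the inner product that arises in the distance formula for $L(\mathbf{x})$ would instead involve $\sum_j x_j\overline{h_{ij}} = (\overline{H}\mathbf{x})_i$, whose magnitude is not a priori bounded. Conjugating the test vector switches which side carries the conjugation and produces exactly $\overline{(H\mathbf{x})_i}$, of norm $\sqrt{n}$. I do not anticipate any deeper obstacle. The residual gap of $\tfrac{1}{3}\sqrt{n}$ between the two bounds is genuine rather than an artefact of the argument: it reflects the fact that $\mathcal{R}(\zeta_3^{-\alpha}\overline{y_i})$ can oscillate between $\tfrac{1}{2}$ and $1$ over the three available values of $\alpha$, so a generic bent vector need not saturate Leducq's upper bound.
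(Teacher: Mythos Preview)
Your proof is correct and follows the same route as the paper: the upper bound is Theorem~\ref{Leducq6} at $q=3$, and the lower bound comes from using a bent vector as a witness together with Lemma~\ref{lemma-dh} and the bound $\mathcal{R}(z)\le |z|=\sqrt{n}$. Your explicit use of $\overline{\mathbf{x}}$ rather than $\mathbf{x}$ to make the inner product line up with $(H\mathbf{x})_i$ is a bit more careful than the paper's phrasing, but the underlying argument is identical.
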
 

\begin{proof} 
The upper bound comes from Theorem \ref{Leducq6} with $q = 3$. 
For the lower bound, let ${\bf x}$ be a bent vector for $H$. By hypothesis $|\langle {\bf x}, r_{a}\rangle|^{2} = n$ for any $r_{a}$, hence $\mathcal{R}(\langle {\bf x}, r_{a} \rangle) \leq \sqrt{n}$. By Lemma \ref{lemma-dh}, we conclude that 
\[ d({\bf x}, r_{a}) \geq \frac{2}{3}( n - \sqrt{n}) \]
for any codeword $r_{a}$. Thus ${\bf x}$ is a vector at distance at least $\frac{2}{3}( n - \sqrt{n})$ from all codewords, which concludes the proof. 
\end{proof}

\section*{Acknowledgement}
The first author was supported by
 project TED2021-130566B-I00  from the Ministry of Science and Innovation of the Government of Spain.

The fourth author acknowledges the support of the Faculty of Humanities and Social Sciences at DCU and the support of the CPID in TUS.

\end{document}